\documentclass[reqno, 12pt]{amsart}
\pdfoutput=1
\makeatletter
\let\origsection=\section \def\section{\@ifstar{\origsection*}{\mysection}} 
\def\mysection{\@startsection{section}{1}\z@{.7\linespacing\@plus\linespacing}{.5\linespacing}{\normalfont\scshape\centering\S}}
\makeatother        
\usepackage{amsmath,amssymb,amsthm}    
\usepackage{mathrsfs}    
\usepackage{mathabx}\changenotsign

\usepackage{microtype}

\usepackage{xcolor}
\usepackage[backref]{hyperref}
\hypersetup{
    colorlinks,
    linkcolor={red!60!black},
    citecolor={green!60!black},
    urlcolor={blue!60!black}
}

\usepackage{bookmark}

\usepackage[abbrev,msc-links,backrefs]{amsrefs}

\usepackage{doi}

\renewcommand{\PrintDOI}[1]{\doi{#1}}

\usepackage[T1]{fontenc}
\usepackage{lmodern}

\usepackage[english]{babel}

\linespread{1.3}
\usepackage{geometry}
\geometry{left=27.5mm,right=27.5mm, top=25mm, bottom=25mm}

\usepackage{enumitem}
\def\rmlabel{\upshape({\itshape \roman*\,})}
\def\RMlabel{\upshape(\Roman*)}
\def\alabel{\upshape({\itshape \alph*\,})}

\let\polishlcross=\l
\def\l{\ifmmode\ell\else\polishlcross\fi}

\def\qand{\quad\text{and}\quad}
\def\qqand{\qquad\text{and}\qquad}

\let\emptyset=\varnothing
\let\setminus=\smallsetminus

\makeatletter
\def\moverlay{\mathpalette\mov@rlay}
\def\mov@rlay#1#2{\leavevmode\vtop{   \baselineskip\z@skip \lineskiplimit-\maxdimen
   \ialign{\hfil$\m@th#1##$\hfil\cr#2\crcr}}}
\newcommand{\charfusion}[3][\mathord]{
    #1{\ifx#1\mathop\vphantom{#2}\fi
        \mathpalette\mov@rlay{#2\cr#3}
      }
    \ifx#1\mathop\expandafter\displaylimits\fi}
\makeatother

\newcommand{\dcup}{\charfusion[\mathbin]{\cup}{\cdot}}

\newtheorem{theorem}{Theorem} 
\newtheorem{lemma}[theorem]{Lemma}

\newtheorem{claim}[theorem]{Claim} 
\newtheorem{fact}[theorem]{Fact}

\newtheoremstyle{definition}  {4pt}  {4pt}  {\sl}  {}  {\bfseries}  {.}  {.5em}          {}
\theoremstyle{definition}
\newtheorem{definition}[theorem]{Definition}

\theoremstyle{remark}

\newtheoremstyle{introthms}  {3pt}  {3pt}  {\itshape}  {}  {\bfseries}  {.}  {.5em}          {\thmnote{#3}}\theoremstyle{introthms}

\let\eps=\varepsilon
\let\theta=\vartheta
\let\rho=\varrho
\let\phi=\varphi

\def\cC{\mathcal{C}}

\def\cO{\mathcal{S}}
\def\cS{\mathcal{O}} 
\def\cI{\mathcal{I}}

\DeclareMathOperator{\vol}{vol}

\newcommand{\vek}[1]{\boldsymbol{#1}}

\newcommand{\pr}[1]{\mathbb{P}\left[#1\right]}
\newcommand{\ex}[1]{\mathbb{E}#1}

\DeclareMathOperator{\ext}{ex}

\newcommand{\slanted}[1]{\slshape{#1}}

\newcommand{\raf}[1]{\eqref{#1}}

\newcommand{\iso}{\cong}

\begin{document}

\title{Extremal results for odd cycles in sparse pseudorandom graphs}

\author{Elad Aigner-Horev}
\address{Fachbereich Mathematik, Universit\"at Hamburg, Hamburg, Germany}
\email{elad.horev@math.uni-hamburg.de}

\author{Hi\d{\^e}p H\`an}
\address{Instituto de Matem\'atica e Estat\'{i}stica, Universidade de S\~{a}o Paulo,S\~{a}o Paulo, Brazil}
\email{hh@ime.usp.br}

\author[Mathias Schacht]{Mathias Schacht}
\address{Fachbereich Mathematik, Universit\"at Hamburg, Hamburg, Germany}
\email{schacht@math.uni-hamburg.de}
\thanks{E.\ Aigner-Horev was supported by the Minerva Foundation and the Young Researchers Initiative of the University of Hamburg.
H.\ H\`an was supported by FAPESP (Proc.~2010/16526-3), by CNPq (Proc.~477203/2012-4) and by NUMEC/USP, N\'ucleo de Modelagem Estoc\' astica e Complexidade of the University of S\~ao Paulo. M.\ Schacht was supported through the Heisenberg-Programme of the
DFG\@.}

\begin{abstract}
We consider extremal problems for subgraphs of pseudorandom graphs. For graphs $F$ and $\Gamma$
the generalized Tur\'an density $\pi_F(\Gamma)$ denotes the density of a maximum 
subgraph of $\Gamma$, which contains no copy of~$F$. 
Extending classical Tur\'an type results for odd cycles, we show that
$\pi_{F}(\Gamma)=1/2$ provided~$F$ is an odd cycle and $\Gamma$ is a sufficiently pseudorandom graph.

In particular, for $(n,d,\lambda)$-graphs $\Gamma$, i.e., $n$-vertex, $d$-regular graphs with
all non-trivial eigenvalues in the interval~$[-\lambda,\lambda]$, 
our result holds for odd cycles of length~$\l$, provided
\[
\lambda^{\l-2}\ll \frac{d^{\l-1}}n\log(n)^{-(\l-2)(\l-3)}\,.
\]
Up to the polylog-factor this  
verifies a conjecture of Krivelevich, Lee, and Sudakov.
For triangles the condition is best possible 
and was proven previously by 
Sudakov, Szab\'o, and Vu, 
who
addressed the case when $F$ is a complete graph. 
A construction of Alon and Kahale (based on an earlier construction of Alon for triangle-free $(n,d,\lambda)$-graphs) 
shows that our assumption on 
$\Gamma$ is best possible up to the polylog-factor for every odd $\l\geq 5$.
\end{abstract}
\maketitle

\section{Introduction and main result}

For two graphs $G$ and $H$, the \emph{generalized Tur\'an number}, denoted by $\ext(G,H)$, is defined to be the maximum number of edges  an $H$-free  subgraph of~$G$ may have. Here, a graph~$G$ is $H$-free if it contains no
 copy of $H$ as a (not necessarily induced) subgraph. With this notation, the well known result of 
 Erd\H os and Simonovits~\cite{ErdSim} (which already appeared implicitly in the work of  
 Erd\H{o}s and Stone~\cite{ErdSt}) reads
\begin{equation}\label{eq:ES-thm}
\ext(K_n, H)=\left(1-\frac1{\chi(H)-1}+o(1)\right)\binom{n}2
\end{equation}
where $\chi(H)$ denotes the chromatic number of $H$.

The systematic study of extensions of~\eqref{eq:ES-thm} arising from replacing~$K_n$ 
with a sparse random or a pseudorandom graph was initiated by
Kohayakawa and collaborators (see, e.g.,~\cites{HKL95,HKL96,KLR97,KRSSS}, see also~\cite{BSS90} for some earlier work for $G(n,1/2)$).
For random graphs such extensions were obtained recently in~\cites{CG,Sch}. 

Here, we continue the study 
for pseudorandom graphs. Roughly speaking, a pseudorandom graph is 
a graph whose edge distribution closely resembles 
that of a truly random graph of the same edge density. 
One way to formally capture such a notion of pseudorandomness is through {\slanted eigenvalue separation}. 
A graph $G$ on $n$ vertices may be associated with an $n \times n$ adjacency matrix $A$. 
This matrix is symmetric and, hence, all its eigenvalues $\lambda_1\geq\lambda_2\geq\dots\geq \lambda_n$ are real.
If $G$ is $d$-regular, then $\lambda_1=d$ and $|\lambda_n|\leq d$ and
the difference in order of magnitude between $d$ and the \emph{second eigenvalue}  
$\lambda(G)=\max\{\lambda_2,|\lambda_n|\}$ of~$G$ is often called the \emph{spectral gap} of~$G$. It is well known~\cites{A86,T} that the spectral gap provides a measure of control over the edge distribution of $G$. Roughly, the larger is the spectral gap the stronger is the resemblance  between the edge distribution of $G$ and that of the random graph $G(n,p)$, where $p=d/n$.  
This phenomenon led to the notion of $(n,d,\lambda)$-\emph{graphs} by which we mean  $d$-regular 
$n$-vertex graphs satisfying  $|\lambda(G)| \leq \lambda$. 

Tur\'an type problems for sparse pseudorandom graphs were addressed in~\cites{Chung,KRSSS,SSzV}. 
In this paper, we continue in studying extensions of the Erd\H{o}s-Stone~\cite{Bol} theorem for sparse host 
graphs and determine upper bounds for the generalized Tur\'an number for odd cycles in sparse pseudorandom 
host graphs, i.e., $\ext(G,C_{2k+1})$ where $G$ is a pseudorandom graph and $C_{2k+1}$ is the odd cycle of 
length $2k+1$.
Our work is related to  work of Sudakov, Szab\'o, and Vu~\cite{SSzV} who investigated $\ext(G,K_t)$  
for a pseudorandom graph $G$ and the clique $K_t$ of order $t\geq 3$. 
Their result may be viewed as the pseudorandom counterpart 
of Tur\'an's theorem~\cite{Turan}. 
For any graph $G$, the trivial lower bound $\ext(G,C_{2k+1})\geq |E(G)|/2$ 
follows from the fact that every graph $G$ contains a bipartite subgraph with at least half the edges of $G$. 
For~$G \iso K_n$, this bound is essentially tight, by the Erd\H{o}s-Stone theorem. Our  result asserts that 
this bound remains essentially tight for sufficiently pseudorandom graphs. 

\begin{theorem}
\label{thm:ndlambda}
Let  $k\geq 1$ be an integer. If $\Gamma$ is an $(n,d,\lambda)$-graph satisfying
\begin{align} 
\lambda^{2k-1} & \ll \frac{d^{2k}}{n} \left( \log n \right)^{-2(k-1)(2k-1)} \label{lambda-bound}, \\
\intertext{then}
\ext(\Gamma,C_{2k+1}) & =\left (\frac12+o(1)\right)\frac{dn}2.  \end{align}
\end{theorem}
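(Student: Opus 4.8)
We sketch the strategy. The lower bound is immediate: a maximum cut of $\Gamma$ is bipartite, hence $C_{2k+1}$-free, and contains at least $e(\Gamma)/2=dn/4$ edges. For the upper bound fix $\epsilon>0$, assume \eqref{lambda-bound}, and suppose towards a contradiction that $G\subseteq\Gamma$ is $C_{2k+1}$-free with $e(G)\ge(\tfrac14+\epsilon)dn$; we derive a contradiction once $n$ is large. Since $\lambda\ll d$, the expander mixing lemma shows that every cut of $\Gamma$ has $(\tfrac12+o(1))\tfrac{dn}2$ edges, so a bipartite subgraph of $\Gamma$ has at most that many edges; consequently $G$ is \emph{far from bipartite} in the sense that every bipartition of $V(\Gamma)$ leaves $\Omega_\epsilon(dn)$ edges of $G$ uncut. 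The same local-density bound (a set $W$ cannot be much denser than $\Gamma$ itself) lets us delete vertices of small $G$-degree and pass to a $C_{2k+1}$-free subgraph of $G$ — still denoted $G$ — that lives on $\Omega_\epsilon(n)$ vertices, has minimum degree $\Omega_\epsilon(d)$, remains far from bipartite, and still contains $\Omega_\epsilon(n)$ vertices $v$ with $|N_G(v)|\ge(\tfrac12+\epsilon)d$. Fix such a $v$ and put $U:=N_G(v)$; it suffices to find a path in $G-v$ of length exactly $2k-1$ between two distinct vertices of $U$, since closing it through $v$ produces a copy of $C_{2k+1}$.

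Splitting such a path at its middle edge, it suffices to find a $G$-edge in $G-v$ joining two vertices each joined to $U$ by a $G$-path of length exactly $k-1$ avoiding $v$, the two paths being vertex-disjoint. The core of the proof is thus an expansion statement for the corresponding breadth-first search in $G-v$ from $U$. In $\Gamma$ this search expands by a factor of order $d^2/\lambda^2$ per step — after $j$ steps it reaches a set of size of order $\min\{n,\,d^{\,2j+1}/\lambda^{2j}\}$ — and under \eqref{lambda-bound} the level-$(k-1)$ sets are already large enough that the mixing lemma forces a $\Gamma$-edge between two of them; this last step is what makes the exponents $\lambda^{2k-1}$ and $d^{2k}/n$ in \eqref{lambda-bound} line up exactly. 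Two difficulties remain: (i) the search must take place inside $G$, an arbitrary dense subgraph of $\Gamma$, whose neighbourhoods need not expand at this rate; and (ii) even granting expansion, the level-$(k-1)$ sets could fail to span an edge of $G$ for parity reasons — the obstruction realised by $K_{\lceil n/2\rceil,\lfloor n/2\rfloor}$.

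I would handle (i) by refining each level: since $G$ carries $\Omega_\epsilon(d)$ edges at every remaining vertex, a dyadic pigeonhole over the forward $G$-degrees isolates, inside each level, a large ``core'' through which the search does expand by a factor of order $d^2/\lambda^2$; passing to this core costs a factor of order $(\log n)^2$ per level, and iterating over the $\Theta(k)$ levels produces the loss $(\log n)^{2(k-1)(2k-1)}$ recorded in \eqref{lambda-bound}. I would handle (ii) using far-from-bipartiteness together with the freedom in the choice of $v$: were the level-$(k-1)$ sets to avoid each other's $G$-neighbourhood for every admissible $v$, then $G$ would admit a bipartition cutting all but $o(dn)$ of its edges, contradicting far-from-bipartiteness; a single uncut edge of the ``wrong'' parity, lying between two of these large sets, is then promoted — via the mixing lemma applied to the sets and the core refinement of (i) — to an edge of $G$ completing the configuration. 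Finally, all the sets involved are polynomially large while the configuration spans only $2k+1$ vertices, so the explorations that fail to be honest paths (revisiting a vertex, or passing through $v$) are of lower order, and some choice yields a genuine copy of $C_{2k+1}$ in $G$ — the desired contradiction.

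The main obstacle is exactly the tension in (i) between the two graphs: the mixing lemma controls only $\Gamma$, whereas the breadth-first search that locates the $(2k-1)$-path must expand inside $G$. Forcing the density of $G$ to propagate through the $\Theta(k)$ levels of the search — while retaining enough of it at each level both to keep expanding and, at the last level, to reconnect with $U$ — is what necessitates the polylogarithmic slack, and hence the gap between Theorem~\ref{thm:ndlambda} and the conjecture of Krivelevich, Lee, and Sudakov.
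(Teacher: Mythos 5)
Your lower bound and the reduction to finding, for a vertex $v$ with $\deg_G(v)\geq(\tfrac12+\epsilon)d$, a $G$-path of length $2k-1$ between two neighbours of $v$ are fine, and the expansion-in-$G$ step (i) is plausible in outline: minimum degree $\Omega_\epsilon(d)$ in $G$ together with jumbledness of $\Gamma$ does force BFS levels in $G$ to grow at rate roughly $d^2/\lambda^2$, and a dyadic refinement of forward degrees is indeed the kind of device needed (it is close in spirit to the partition of neighbourhoods by back-degrees that the paper uses in Claim~\ref{lem:qxqy}). The genuine gap is step (ii), the bipartite obstruction, which is the heart of the problem and is where your argument is circular rather than incomplete. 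You assert that if, for \emph{every} admissible $v$, the two level-$(k-1)$ sets avoid each other's $G$-neighbourhood, then $G$ admits a bipartition cutting all but $o(dn)$ of its edges; no mechanism is offered, and none is apparent: the level sets depend on $v$ and on the exploration, and there is no canonical way to assemble these local failures into a single global $2$-colouring. Already for $k=1$ your claim reads: a triangle-free subgraph $G\subseteq\Gamma$ with $e(G)\geq(\tfrac14+\epsilon)dn$ is $o(dn)$-close to bipartite --- a \emph{stability strengthening} of the very statement being proved, so it cannot be taken as an ingredient. Likewise, the final step in which ``a single uncut edge of the wrong parity is promoted, via the mixing lemma, to an edge of $G$ completing the configuration'' has no argument behind it: the mixing lemma only produces $\Gamma$-edges between large sets, and a $\Gamma$-edge in the middle of the putative path is worthless, since all $2k+1$ edges of the cycle must lie in $G$.

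For contrast, the paper never argues through ``far from bipartiteness''. It first regularizes: Lemma~\ref{lem:partition} extracts from $G$ an $(\alpha,p,\eps)$-degree-regular $C_{2k+1}(m)$-graph $H$ with $\alpha\geq\tfrac12+\delta$ and $m=\Omega(n)$. It then counts cycles with $2k$ edges in $H$ and one edge in $\Gamma[U_k,V_k]$, and --- this is the step your sketch has no substitute for --- bounds the number of \emph{saturated} cycles (\eqref{eq:wishfor2} of Lemma~\ref{lem:main}), i.e.\ rules out the possibility that the many $H$-paths of length $2k$ cluster on few $\Gamma$-edges. Hence the set $F$ of distinct $\Gamma$-edges of $[U_k,V_k]$ hit by such paths has size at least $(\alpha-\tfrac\delta2)pm^2$, while $e_H(U_k,V_k)\geq(\alpha-\tfrac\delta2)pm^2$ and $e_\Gamma(U_k,V_k)\leq(1+\tfrac\delta2)pm^2$; because $\alpha>\tfrac12+\delta$, the two sets must intersect, producing a $C_{2k+1}$ entirely inside $G$. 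It is exactly this pigeonhole on a single regularized pair, made possible by the density exceeding $1/2$, that defeats the $K_{\lceil n/2\rceil,\lfloor n/2\rfloor}$ obstruction; your proposal needs either this or a genuinely new argument in its place. (A minor further point: your polylog bookkeeping does not match the statement --- $(\log n)^2$ per level over $k-1$ levels gives $(\log n)^{2(k-1)}$, which is the loss in the jumbledness condition of Theorem~\ref{thm:main}; the exponent $2(k-1)(2k-1)$ in \eqref{lambda-bound} only appears after raising that condition to the power $2k-1$ when translating to $(n,d,\lambda)$-graphs.)
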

The asymptotic notation in \raf{lambda-bound} means that $\lambda$ and $d$ may be functions of $n$ and for two functions $f(n)$, $g(n) >0$ we write $f(n) \ll g(n)$ whenever $\frac{f(n)}{g(n)} \rightarrow 0$ as $n \rightarrow \infty$.

For $k=1$, the same problem was studied in~\cite{SSzV}. In this case, we obtain the same result  which is known to be best possible due to
a construction of Alon~\cite{A94}. 
For $k\geq 2$, Alon's construction can be extended as to fit for general odd cycles; see~\cite{AK} and~\cite{KS}*{Example~10}, implying
 that  for any $k\geq2$ the condition~\raf{lambda-bound} is best possible up to the polylog-factor. We also remark that Theorem~\ref{thm:ndlambda} was essentially (up to the polylog-factor) conjectured by Krivelevich, 
Lee, and Sudakov~\cite{KLS10}*{Conjecture~7.1}.

\subsection{The main result}

Theorem~\ref{thm:ndlambda} is a consequence of Theorem~\ref{thm:main} stated below for the so called \emph{jumbled} graphs. We recall this notion of pseudorandomness 
which can be traced back to the work of Thomason~\cite{Th}.

Let $\Gamma$ be a graph and $X, Y\subset V(\Gamma)$. By $\vol(X,Y)$ we denote the number of all pairs 
with one element from $X$ and the other element being from $Y$ and by $e_{\Gamma}(X,Y)$ we denote the number of actual edges $xy \in E(\Gamma)$ satisfying $x \in X$ and $y \in Y$
and  let $e_{\Gamma}(X)=e_{\Gamma}(X,X)$.
\begin{definition}
Let $p=p(n)$ be a sequence of densities, i.e., $0\leq p\leq 1 $, and let $\beta=\beta(n)$.
An $n$-vertex graph $\Gamma$ is called $(p,\beta)$-\emph{jumbled} if 
\begin{equation*}
\left|e_{\Gamma}(X,Y)-p\vol(X,Y)\right| \leq \beta \vol(X,Y)^{1/2},
\end{equation*}
for all $X,Y \subseteq V(\Gamma).$
\end{definition}
In particular, for disjoint sets $X,Y$ 
\begin{equation}\label{eq:jumbled1}
\left|e_{\Gamma}(X,Y)-p|X||Y|\right| \leq \beta (|X||Y|)^{1/2},
\end{equation}
and for $X=Y$ we have
\begin{equation}\label{eq:jumbled2}
\left|e_{\Gamma}(X)-p\binom{|X|}2\right| \leq \beta |X|. 
\end{equation}

The following is our main result. 
\begin{theorem}
\label{thm:main}
For every integer $k\geq 1$ and every $\delta>0$ there exists a $\gamma>0$ such that for every sequence of densities
$p=p(n)$ there exists an $n_0$ such that for any $n \geq n_0$ the following holds.

If $\Gamma$ is an $n$-vertex $(p,\beta)$-jumbled graph satisfying
\begin{align}
\beta=\beta(n) & \leq \gamma p^{1+\frac1{(2k-1)}}n\log^{-2(k-1)} n, \label{eq:beta} \\
\intertext{then} 
\ext(\Gamma,C_{2k+1}) & < \left(\frac12+\delta\right)p\binom{n}2. \nonumber
\end{align}
\end{theorem}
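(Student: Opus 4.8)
The plan is to argue by contradiction: assume $G\subseteq\Gamma$ has $e(G)\geq(\tfrac12+\delta)p\binom n2$ and contains no $C_{2k+1}$, and derive a contradiction through a sparse regularity reduction followed by an embedding lemma for odd cycles calibrated to \eqref{eq:beta}. Since \eqref{eq:beta} forces $\beta=o(pn)$, \eqref{eq:jumbled2} shows $G$ has no dense spots ($e_G(X)\le e_\Gamma(X)\le(\tfrac12+o(1))p|X|^2$ whenever $|X|=\Omega(n)$), so the sparse (upper-uniform) regularity lemma of Kohayakawa and R\"odl applies to $G$: for a suitably small $\eta=\eta(\delta,k)$ there is an equipartition $V(\Gamma)=V_1\cup\dots\cup V_m$ with $m\le m_0(\delta,k)$ in which all but $\eta\binom m2$ pairs are $(\eta,p)$-regular in $G$. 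I would then form the reduced graph $R$ on $[m]$, joining $i,j$ when $(V_i,V_j)$ is $(\eta,p)$-regular in $G$ with $e_G(V_i,V_j)\ge\tfrac\delta{10}p|V_i||V_j|$. The usual accounting (edges inside parts, inside irregular pairs and inside sparse pairs together account for at most $(\tfrac\delta{10}+\eta+o(1))p\binom n2$ edges) forces $e(R)\ge(\tfrac14+\tfrac\delta3)m^2>\lfloor m^2/4\rfloor=\ext(m,C_{2k+1})$ for $n$, hence $m$, large, so $R$ contains a copy of $C_{2k+1}$. Equivalently, there are clusters $W_0,\dots,W_{2k}$ of size $\Theta(n)$ whose cyclically consecutive pairs are $(\eta,p)$-regular in $G$ with $G$-density at least $\tfrac\delta{10}p$.

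The crux is the embedding step: such a cluster cycle must carry a copy of $C_{2k+1}$ with one vertex in each $W_i$. Here I would exploit the bipartite structure of the odd cycle. A prospective copy $w_0w_1\cdots w_{2k}w_0$ is the union of two internally disjoint $w_0$--$w_k$ paths in $G$, one of length $k$ through $W_1,\dots,W_{k-1}$ and one of length $k+1$ through $W_{2k},W_{2k-1},\dots,W_{k+1}$; since these two families of interior clusters are disjoint, any such pair of paths is automatically internally disjoint and the union is a genuine $C_{2k+1}$. Hence it suffices to show that all but a $\mu$-fraction of the pairs $(w_0,w_k)\in W_0\times W_k$ are joined by a length-$k$ path through $W_1,\dots,W_{k-1}$, and (separately) that all but a $\mu$-fraction are joined by a length-$(k+1)$ path through $W_{2k},\dots,W_{k+1}$: for $\mu<\tfrac12$ some pair admits both, which produces the cycle. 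Each of these is a one-sided path-counting lemma for a chain of $(\eta,p)$-regular pairs sitting inside the jumbled host $\Gamma$, and this is where the real work lies: the naive approach iterates neighbourhoods $N_G(w_0)\cap W_1$, $N_G(N_G(w_0)\cap W_1)\cap W_2,\dots$, which shrink by roughly a factor $p$ per step and quickly drop below the scale $\eta|W_i|$ at which $(\eta,p)$-regularity of $G$ gives any information.

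To get past this, I would feed the sparse candidate sets back into $\Gamma$: once a set $S\subseteq W_i$ has size at least a suitable power of $p$ times $|W_i|$, the jumbledness inequality \eqref{eq:jumbled1} pins down $e_\Gamma(S,W_{i+1})$, and combining this with $(\eta,p)$-regularity of $G$ between $W_i$ and $W_{i+1}$ shows that $S$ reaches almost all of $W_{i+1}$ in a single $G$-step — equivalently, a densification step that collapses two consecutive sparse regular bipartite graphs into an essentially complete one. Running this over the $O(k)$ steps of the chain and carefully tracking the exceptional vertices discarded at each step — each such step costing a logarithmic factor, which is exactly the origin of the $\log^{-2(k-1)}n$ in \eqref{eq:beta} — yields the required $o(1)$ bound on bad pairs. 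The exponent $1+\tfrac1{2k-1}=m_2(C_{2k+1})$ is precisely the jumbledness threshold at which this argument survives: it is the point below which the smallest candidate set arising along the way is still large on the scale of $\Gamma$, i.e.\ where the error term $\beta\vol(\cdot,\cdot)^{1/2}$ in \eqref{eq:jumbled1} remains negligible against the main term throughout the iteration. Everything outside this path-counting/embedding lemma is standard; the main obstacle — and the reason a black-box counting lemma for jumbled graphs does not suffice — is making the embedding work at this near-optimal value of $\beta$, which requires using the specific structure of the odd cycle (the split into two short paths pinned at common endpoints, and the densification of consecutive regular pairs) while controlling the accumulated exceptional sets so that only a polylogarithmic factor is lost.
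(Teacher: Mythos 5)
Your outer reduction (sparse regularity for upper-uniform graphs, a reduced graph with more than $m^2/4$ edges, hence a cluster copy of $C_{2k+1}$, then splitting the odd cycle into two paths pinned at $w_0$ and $w_k$) is a coherent and genuinely different route from the paper's, but the step you yourself identify as the crux --- the one-sided path-counting lemma along a chain of $(\eta,p)$-regular pairs --- is exactly where the argument as sketched breaks down. After even one neighbourhood iteration the candidate set $S\subseteq W_i$ has size of order $p^{c}|W_i|=o(|W_i|)$, and $(\eta,p)$-regularity of $(W_i,W_{i+1})$ in $G$ carries no information about sets below the scale $\eta|W_i|$. Jumbledness of $\Gamma$ does pin down $e_\Gamma(S,W_{i+1})\approx p|S||W_{i+1}|$, but for the subgraph $G$ this gives only an \emph{upper} bound: one may delete every $\Gamma$-edge of $S\times W_{i+1}$ from $G$, changing $e_G(W_i,W_{i+1})$ by only $o\bigl(p|W_i||W_{i+1}|\bigr)$, so that the pair remains $(2\eta,p)$-regular and dense while $N_G(S)\cap W_{i+1}=\emptyset$. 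Hence no combination of regularity of $G$ with jumbledness of $\Gamma$ forces a sparse candidate set to expand \emph{in $G$}; this is the standard failure mode of embedding lemmas for sparse regular pairs. A genuine one-sided counting lemma for cycles in regular pairs inside jumbled hosts does exist (Conlon--Fox--Zhao), but its densification is a considerably more elaborate operation on weighted graphs, and carrying it out at $\beta\sim p^{1+1/(2k-1)}n$ is precisely the content you have deferred to a sketch.

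The paper sidesteps the problem by never asking a sparse set to expand in $G$. Lemma~\ref{lem:partition} (an iterated minimum-/maximum-degree cleanup, Lemma~\ref{lem:cleanup}, followed by a random equipartition --- no regularity lemma) produces inside $G$ a blown-up odd cycle $H$ that is \emph{degree-regular}: every vertex has degree $(\alpha\pm\varepsilon)pm$ into each neighbouring class, with $\alpha\ge 1/2+\delta$. Paths of length $2k$ between the end-classes $U_k$ and $V_k$ are then counted inside $H$ using only this degree-regularity; jumbledness is invoked solely for $\Gamma$-edges between subsets of $U_k$ and $V_k$, where the two-sided bound \eqref{eq:jumbled1} is exactly what is available, and a dyadic pigeonholing of path-counts (Lemma~\ref{lem:main} and Claim~\ref{lem:qxqy}) shows these closing edges cannot cluster on few pairs. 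The punchline is a pigeonhole: more than $(\alpha-\delta/2)pm^2>\tfrac12 e_\Gamma(U_k,V_k)$ of the $\Gamma$-edges between $U_k$ and $V_k$ close a $2k$-path of $H$, while $H$ itself contains more than half of those edges, so the two edge sets intersect and $C_{2k+1}\subseteq H\subseteq G$. Note that this intersection step is where the hypothesis $e(G)\ge(\tfrac12+\delta)p\binom n2$ is actually used --- not, as in your outline, to make a reduced graph non-bipartite.
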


By the so called \emph{expander mixing lemma}~\cites{AM,T} (see also~\cite{AS}*{Prop.~9.2.1}), 
an $(n,d,\lambda)$-graph  is  
$(p,\beta)$-jumbled with $p=d/n$ and 
$\beta=\lambda$ and it is easily seen that Theorem~\ref{thm:main} indeed implies Theorem~\ref{thm:ndlambda}.

\section{Proof of Theorem~\ref{thm:main}}\label{sec:proof-main}

Our proof of Theorem~\ref{thm:main} relies on Lemmas~\ref{lem:partition} and~\ref{lem:main} stated below. In this section, we state these lemmas while defering their proofs to Sections~\ref{sec:partitionlemma} and~\ref{sec:mainlemma}, respectively. We then show how these two lemmas imply Theorem~\ref{thm:main}. 

To state Lemma~\ref{lem:partition}, we employ the following notation. 
For a graph $G$ and disjoint vertex sets $X,Y \subseteq V(G)$, we write $G[X,Y]$ to denote the bipartite subgraph of $G$ whose vertex set is $X \cup Y$ and whose edge set, denoted $E_G(X,Y)$, consists of all edges of $G$ with one end in $X$ and the other in $Y$.  Also, we write $E_G(X)$ to denote the edge set of $G[X]$. 

For a graph $R$ and a positive integer $m$, we write $R(m)$ to denote the graph obtained by replacing every vertex $i\in V(R)$ with a set of vertices $V_i$ of size $m$ and adding the complete bipartite graph between $V_i$ and $V_j$ whenever $ij\in E(R)$. A spanning subgraph of $R(m)$ is called  an $R(m)$-\emph{graph}. In addition, such a graph, say $G \subseteq R(m)$, is called $(\alpha,p,\eps)$-\emph{degree-regular} 
if $\deg_{G[V_i,V_j]}(v)=(\alpha \pm \eps)pm$ holds whenever $ij \in E(R)$ and $v \in V_i \cup V_j$. 
Here and in what follows the term $(a\pm b)$ stands for a number in the interval $[a-b,a+b]$.
Throughout, the notation $R(m')$ for a positive real $m'$ is shorthand for $R(\lceil m' \rceil)$; such conventions do not effect our asymptotic estimates. 

The following lemma essentially asserts that under a certain assumption of jumbledness, a relatively dense subgraph of a sufficiently large $(p,\beta)$-jumbled graph contains a  degree-regular $C_{\l}(m)$-graph with large $m$.

\begin{lemma}
\label{lem:partition}
For any integer $\l\geq 3$, all $\rho> 0$, $\alpha_0>0$ and $0<\eps<\alpha_0$ there exist a $\nu>0$ and  a $\gamma>0$ such that for every sequence of densities $p=p(n)\gg\log n/n$ there exists an~$n_0$ such that for every $n \geq n_0$ the following holds.

Let $\Gamma$ be an $n$-vertex $(p,\beta)$-jumbled graph with $\beta=\beta(n) \leq \gamma p^{1+\rho}n$ and let 
$G\subset \Gamma$ be a subgraph of $\Gamma$ satisfying $e(G)\geq \alpha_0 p\binom{n}2$.
Then, there exists an $\alpha \geq \alpha_0$ such that $G$ contains an $(\alpha,p,\eps)$-degree-regular 
$C_{\ell}(\nu n)$-graph as a subgraph.
\end{lemma}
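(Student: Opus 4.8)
\emph{Overall strategy.} The plan is to apply the regularity lemma for sparse graphs to $G$ relative to the jumbled host $\Gamma$, pass to a reduced graph, locate a copy of $C_\ell$ there whose pair densities are simultaneously large and close to one another, and finally clean up the corresponding clusters to obtain the degree-regular blow-up.

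\emph{Regularisation.} First I would invoke the sparse regularity lemma. The assumptions $p\gg\log n/n$ and $\beta\le\gamma p^{1+\rho}n$ ensure, via \eqref{eq:jumbled1}, that $\Gamma$ (hence $G$) has no dense spots, i.e.\ $e_\Gamma(X,Y)\le(1+o(1))p|X||Y|$ whenever $|X|,|Y|$ are not too small, which is exactly the hypothesis under which that lemma applies. For a small $\eps'=\eps'(\ell,\eps)$ one thus obtains an equitable partition $V_0\dcup V_1\dcup\dots\dcup V_t$ of $V(\Gamma)$ with $t$ bounded in terms of $\eps'$ (and as large as we like), $|V_0|\le\eps'n$, $|V_1|=\dots=|V_t|=:m$, and all but at most $\eps't^2$ pairs $(V_i,V_j)$ being $(\eps',p)$-regular in $G$. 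Choosing $\gamma$ small once $t$ is fixed makes $\beta t/n=o(p)$, whence \eqref{eq:jumbled1} gives $e_\Gamma(V_i,V_j)=(1+o(1))pm^2$ for every pair, so that the normalised densities $d(V_i,V_j):=e_G(V_i,V_j)/(pm^2)$ satisfy $d(V_i,V_j)\le 1+o(1)$ throughout.

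\emph{The reduced graph and a blow-up.} Fix a small $\tau=\tau(\alpha_0,\eps)$ and let $ij\in E(R)$ precisely when $(V_i,V_j)$ is $(\eps',p)$-regular in $G$ with $d(V_i,V_j)\ge\tau$. The edges of $G$ lying inside some $V_i$, touching $V_0$, in an irregular pair, or in a pair of density below $\tau$ contribute in total at most $\bigl(O(\eps')+\tau+o(1)\bigr)p\binom n2$ — here the uniform bound $d\le1+o(1)$ is what prevents the irregular pairs from contributing too much — so that $e(R)\,pm^2\ge e(G)-\bigl(O(\eps')+\tau+o(1)\bigr)p\binom n2$ and hence $e(R)\ge\bigl(\alpha_0-O(\eps')-\tau-o(1)\bigr)\binom t2$. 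Sorting the regular pairs with $d\ge\tau$ into the $O(1/\eps)$ buckets $\bigl[s\eps/2,(s+1)\eps/2\bigr)$, one argues (this is the delicate point, see below) that some bucket of density at least $\alpha_0$ still carries a subgraph $R^\star\subseteq R$ with enough edges to contain $C_\ell$: for $\ell=2k$ even this needs only $e(R^\star)>\ext(t,C_{2k})=o(t^2)$ by the Bondy--Simonovits bound, while for $\ell=2k+1$ odd one uses $\ext(t,C_{2k+1})=\lfloor t^2/4\rfloor$ (Erd\H os--Gallai, $t$ large), so it suffices to arrange $e(R^\star)>\tfrac12\binom t2$, which holds when $\alpha_0>\tfrac12$ and $\eps',\tau$ are small. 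Either way we get cyclically adjacent clusters $W_1,\dots,W_\ell$ of size $m$, with consecutive pairs $(\eps',p)$-regular in $G$ and densities all within $\eps/2$ of a common value $\alpha$, $\alpha_0\le\alpha\le1+o(1)$. Finally, $(\eps',p)$-regularity of $(W_i,W_{i+1})$ forces all but at most $2\eps'm$ vertices of $W_i$ to have $G$-degree $(\alpha\pm\eps)pm$ into $W_{i+1}$; deleting from each $W_i$ the at most $4\eps'm$ vertices bad for either of its two incident pairs and trimming to a common size $m'=(1-o(1))m$ leaves sets $U_1,\dots,U_\ell$ in which every $v\in U_i$ has $\deg_{G[U_i,U_{i+1}]}(v)=(\alpha\pm2\eps)pm'$. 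After renaming the error term this is an $(\alpha,p,\eps)$-degree-regular $C_\ell(m')$-graph inside $G$, and since $m'=\Omega(n/t)$ with $t$ an absolute constant, any small enough constant $\nu$ works.

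\emph{Main obstacle.} The genuinely constrained step is the one hidden in the bucketing: not merely finding \emph{some} copy of $C_\ell$ in the reduced graph, but one all of whose pair densities lie in a narrow window \emph{and} about a value at least $\alpha_0$. For odd $\ell$ a bipartite, hence $C_\ell$-free, reduced graph could otherwise absorb up to $\lfloor t^2/4\rfloor$ of the density budget, so a naive averaging over the $O(1/\eps)$ buckets is not enough; one must exploit the exact odd-cycle Tur\'an number together with the jumbledness-derived fact that no pair is denser than $(1+o(1))p$ — so the density cannot hide in too-dense or too-sparse pairs — presumably through an iterated extraction or a count that tracks the densities, and this is where the quantitative hypotheses really bite. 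Granting that, the sparse regularity lemma and the super-regularisation are routine.
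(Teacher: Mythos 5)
The central step of your argument is missing, and you acknowledge this yourself: after the sparse regularity lemma you still need a copy of $C_\ell$ in the reduced graph all of whose pair densities lie in a window of width $\eps/2$ around a common value $\alpha\geq\alpha_0$, and your only proposal for producing it is an unspecified ``iterated extraction or a count that tracks the densities.'' The route you sketch cannot be completed as stated: splitting the regular pairs into $O(1/\eps)$ density buckets guarantees the best bucket only a $\Theta(\eps)$ fraction of the pairs, far below the $t^2/4$ threshold needed to force an odd cycle, so invoking the odd-cycle Tur\'an number inside one bucket does not work and no substitute argument is supplied. Worse, the whole plan is tied to $\alpha_0>1/2$, whereas the lemma is stated for every $\alpha_0>0$ and must hold, e.g., when $G$ is a bipartite subgraph of $\Gamma$ with density $\alpha_0 p$, $\alpha_0\le 1/2$; then the reduced graph may be bipartite and contain no odd $C_\ell$ at all. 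This does not contradict the lemma, because an $(\alpha,p,\eps)$-degree-regular $C_\ell(\nu n)$-graph is only a spanning subgraph of the blow-up and need not contain any odd cycle --- producing an actual $C_{2k+1}$ is the job of Lemma~\ref{lem:main}, which uses one additional edge of $\Gamma$. In short, you have set yourself a strictly harder (and, for small $\alpha_0$, impossible) goal than the statement requires.

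The paper's proof needs neither the regularity lemma nor any cycle in a cluster graph. It proves a cleanup lemma (Lemma~\ref{lem:cleanup}): in a sufficiently jumbled $\Gamma$, every subgraph of relative density $\alpha$ on linearly many vertices contains a linear-sized set $U$ on which either the density increases by a fixed amount or all degrees of $G[U]$ are $(\alpha\pm\eps)p|U|$. Iterating this (the density increment can occur only a bounded number of times, since jumbledness caps the relative density near $1$) yields an induced, nearly degree-regular subgraph $G[U]$ with $|U|\geq\mu n$ and some $\alpha\geq\alpha_0$. A uniformly random equipartition of $U$ into $\ell$ parts of size $\nu n$ then has, with positive probability (hypergeometric concentration; this is exactly where $p\gg\log n/n$ is used), every vertex of degree $(\alpha\pm\eps)p\nu n$ into every other part, and keeping only the edges between cyclically consecutive parts gives the required $C_\ell(\nu n)$-graph. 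The cyclic structure comes for free because all $\ell$ parts are carved from the single regular set $U$; no copy of $C_\ell$ ever has to be located.
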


Equipped with Lemma~\ref{lem:partition}, we focus on large degree-regular $C_{\l}(m)$-graphs hosted in a 
sufficiently jumbled graph $\Gamma$. In this setting, we shall concentrate on odd cycles in $\Gamma$ 
that have all but one of their edges in the hosted $C_{\l}(m)$-graph. The remaining edge belongs to~$\Gamma$.
The first part of Lemma~\ref{lem:main} stated below provides a lower bound for the number of such 
configurations (see \raf{eq:wishfor1}). Before making this precise we require some additional notation.
Fix a vertex labeling of $C_{2k+1}$, say, $(u_k,\dots,u_1,w,v_1,\dots,v_k)$ and for a given graph $\Gamma$
let $H \subseteq \Gamma$ be a $C_{2k+1}(m)$-graph with 
the  corresponding vertex partition $(U_k,\dots,U_{1},W,V_1,\dots,V_k)$. 
By $\cC(H,\Gamma)$ we denote the set of all cycles of length $(2k+1)$ of the form 
$(u'_k,\dots,u'_1,w',v'_{1},\dots,v'_k)$ such that $w'\in W, v'_i\in V_i$,  
$u'_i\in U_i$, $v'_k u'_k \in E(\Gamma)$, and all edges other than $v'_k u'_k$ are in $E(H)$. 
In other words, a member of $\cC(H,\Gamma)$ is a cycle of~$\Gamma$ of length $2k+1$ which respects the 
vertex partition of $H$ and which has all edges but possibly $v'_k u'_k$ being in $H$.

For a real number $\mu>0$, an edge of $\Gamma[V_k,U_k] $  is called 
$\mu$-\emph{saturated} if such is contained in at least $p(\mu pm)^{2k-1}$ members of $\cC(H,\Gamma)$.
A cycle in $\cC(H,\Gamma)$ containing a $\mu$-saturated edge is called a \emph{$\mu$-saturated cycle}.
We write $\cO(\mu,H,\Gamma)$ to denote the set of $\mu$-saturated cycles in $\cC(H,\Gamma)$.
To motivate the definition of $\mu$-saturated edges, note that we expect that an edge of~$\Gamma[U_k,V_k]$ extends to $(\alpha p)^{2k}m^{2k-1}$ members of $\cC(H,\Gamma)$. For $\mu \approx \alpha$, a $\mu$-saturated edge overshoots this expectation by a factor of $1/\alpha$. 
 
The following lemma asserts that for sufficiently jumbled graphs the number of $\alpha$-saturated cycles is negligible compared to $|\cC(H,\Gamma)|$ (see \raf{eq:wishfor1} and \raf{eq:wishfor2}).

\begin{lemma}
\label{lem:main}
For any integer $k\geq1$ and all reals  $0<\nu,\alpha_0 \leq 1$, and $0< \eps \leq \alpha_0/3 $ there exists a $\gamma>0$  such that for every sequence of densities $p=p(n)$ there exists an $n_0$ such that for any $n \geq n_0$ the following holds. 

If  $\Gamma$ is an $n$-vertex $(p,\beta)$-jumbled  graph with 
\begin{equation}
\beta=\beta(n)\leq \gamma p^{1+\frac1{2k-1}}n\log^{-2(k-1)} n,
\end{equation}
then for any $m\geq \nu n$ and any $\alpha \geq \alpha_0$ an $\left(\alpha,p,\eps \right)$-degree-regular $C_{2k+1}(m)$-graph $H\subseteq \Gamma$ satisfies 
\begin{align}
\label{eq:wishfor1}
|\cC(H,\Gamma)| & \geq  \left(\alpha-2\eps\right)^{2k}(pm)^{2k+1}\\
\intertext{and}
\label{eq:wishfor2}
|\cO(\alpha+2\eps,H,\Gamma)| & \leq  (3\eps)^{2k}\left(pm\right)^{2k+1}.
\end{align}
\end{lemma}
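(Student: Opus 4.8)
The plan is to prove both bounds by a careful counting of homomorphic copies of paths, controlled by the jumbledness hypothesis through a tensor-power/counting-lemma argument.

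\textbf{Step 1: The lower bound \eqref{eq:wishfor1}.} First I would build the cycles of $\cC(H,\Gamma)$ greedily, one vertex at a time along the labeling $(u_k,\dots,u_1,w,v_1,\dots,v_k)$. Start by choosing $w'\in W$; then pick $v'_1\in V_1$ adjacent to $w'$ in $H$, then $v'_2\in V_2$, and so on, and symmetrically $u'_1,\dots,u'_k$. Since $H$ is $(\alpha,p,\eps)$-degree-regular, at every step the current vertex has $(\alpha\pm\eps)pm$ neighbours in the next class \emph{in $H$}, so counting ordered paths of length $2k$ respecting the partition gives at least $m\cdot\big((\alpha-\eps)pm\big)^{2k}$ such paths $(u'_k,\dots,u'_1,w',v'_1,\dots,v'_k)$ with the two endpoints $u'_k\in U_k$, $v'_k\in V_k$. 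To turn a path into a member of $\cC(H,\Gamma)$ one only needs the single edge $v'_ku'_k\in E(\Gamma)$. Here is the one genuinely nontrivial point in this step: one must discard paths whose endpoint pair $(u'_k,v'_k)$ is \emph{not} an edge of $\Gamma$, and more importantly one must not overcount — but since we want a \emph{lower} bound it suffices to show that most endpoint pairs reached are $\Gamma$-edges. This is where jumbledness enters: the set of pairs $(u'_k,v'_k)\in U_k\times V_k$ that are joined by many $H$-paths is large, and by \eqref{eq:jumbled1} applied to the relevant subsets the number of non-edges among them is lower order provided $\beta\ll pm/\log^{2(k-1)}$-type bounds hold — which is exactly what \eqref{eq:beta}-style hypotheses give after the losses accumulated over $2k-2$ intermediate classes. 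Collecting the surviving configurations yields $|\cC(H,\Gamma)|\ge(\alpha-2\eps)^{2k}(pm)^{2k+1}$, after renaming $\eps$ to absorb the lower-order error.

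\textbf{Step 2: The upper bound \eqref{eq:wishfor2} on saturated cycles.} The bound follows once we bound the number of $(\alpha+2\eps)$-saturated \emph{edges} $xy\in E(\Gamma[V_k,U_k])$, because each such edge lies in at most $(\alpha p m + \text{error})^{2k-1}\cdot$-ish many cycles (the number of $H$-paths of length $2k$ from $x$ to $y$ through the other $2k-1$ classes is at most $((\alpha+\eps)pm)^{2k-1}\cdot$const by degree-regularity, and in fact the definition of saturation is calibrated so that a saturated edge sits in $\ge p(\mu pm)^{2k-1}$ such cycles). The key estimate is a second-moment / convexity argument: the \emph{total} number of (path, edge) incidences is at most $|\cC(H,\Gamma)|$ plus a jumbledness error, which by Step 1's kind of computation is at most $(\alpha+\eps)^{2k}(pm)^{2k+1}(1+o(1))$; and the number of edges in $\Gamma[V_k,U_k]$ is $(p\pm\beta/m\text{-corrections})m^2$. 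If there were more than $(3\eps)^{2k}(pm)^{2k+1} / \big(p(\mu pm)^{2k-1}\big) \approx (3\eps)^{2k}p\,m^2/\mu^{2k-1}$ many $\mu$-saturated edges with $\mu=\alpha+2\eps$, each contributing $p(\mu pm)^{2k-1}$ cycles, we would exceed the total count $|\cC(H,\Gamma)|$ — a contradiction once $\eps$ is small relative to $\alpha_0$ (this is precisely where $\eps\le\alpha_0/3$ is used). Multiplying the bound on saturated edges by the maximal number of cycles per edge gives \eqref{eq:wishfor2}. I would take care to verify that the ``cycles per edge'' upper bound and the ``total cycles'' upper bound use the same normalization so the ratio comes out as $(3\eps)^{2k}(pm)^{2k+1}$ cleanly.

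\textbf{Main obstacle.} The hard part will be Step 1's control of intermediate counting errors: when one counts $H$-paths class by class, the multiplicative error $(1\pm\eps/\alpha)$ at each of the $2k$ steps is harmless, but the \emph{additive} jumbledness errors incurred when restricting to the ``many common neighbours'' subsets compound, and each restriction to an atypical vertex set of size as small as roughly $pm$ must still satisfy a usable form of \eqref{eq:jumbled1}; this is exactly why the hypothesis carries the factor $p^{1/(2k-1)}$ and the $\log^{-2(k-1)}n$ — one needs $\beta$ small enough that after $2k-2$ nested applications of jumbledness the error stays below $\eps(pm)^{2k+1}$. Making the dependence of $\gamma$ on $k,\nu,\alpha_0,\eps$ explicit and checking that $n_0$ can be chosen uniformly in the density sequence $p=p(n)$ (as the statement demands) is the bookkeeping that must be done with some care, but it is routine given the jumbledness inequalities \eqref{eq:jumbled1} and \eqref{eq:jumbled2}.
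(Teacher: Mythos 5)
Your Step 2 contains a genuine gap: the global pigeonhole cannot prove \eqref{eq:wishfor2}. An $(\alpha+2\eps)$-saturated edge is only required to lie in $p\bigl((\alpha+2\eps)pm\bigr)^{2k-1}$ members of $\cC(H,\Gamma)$, which exceeds the \emph{average} number of cycles per edge of $\Gamma[U_k,V_k]$ (about $p\bigl(\alpha pm\bigr)^{2k-1}$, since there are roughly $pm^2$ edges and roughly $\alpha^{2k}(pm)^{2k+1}$ cycles) by only a constant factor. Hence comparing with the total count $|\cC(H,\Gamma)|\leq(\alpha+\eps)^{2k}(pm)^{2k+1}$ bounds the number of saturated edges only by roughly $\alpha pm^2$, i.e.\ possibly a constant proportion of \emph{all} edges; and the contradiction you invoke never materializes, because $(3\eps)^{2k}(pm)^{2k+1}\leq|\cC(H,\Gamma)|$ (recall $3\eps\leq\alpha$), so having more saturated cycles than the target bound does not exceed the total. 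Multiplying your edge bound by the trivial per-edge maximum $\bigl((\alpha+\eps)pm\bigr)^{2k-1}$ — which is a factor $1/p$ above the saturation threshold — overshoots $(3\eps)^{2k}(pm)^{2k+1}$ by about $\eps^{-2k}p^{-1}$. The paper's argument is necessarily local rather than global: after rotating the partition as in \eqref{eq:rename} so that saturated edges lie at a vertex $\tilde w\in \tilde W=U_k$, it applies the \emph{upper-bound} direction of the jumbledness-based cycle count to the pair $(N_H(\tilde w)\cap\tilde U_1,\,B)$ with $B$ a set of saturated neighbours of $\tilde w$; since the error term there is an absolute quantity $\xi p(pm)^{2k}$, not proportional to $|B|$, one is forced to conclude $|D_{\alpha+2\eps}(\tilde w)|\leq\eps^{2k}pm$ for every $\tilde w$, and only then does summing over $\tilde w$ give \eqref{eq:wishfor2}.

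Step 1 also omits the key mechanism. Jumbledness \eqref{eq:jumbled1} controls unweighted edge counts between \emph{vertex subsets}, whereas what must be estimated is the weighted sum $\sum_{uv\in E_\Gamma(U_k,V_k)}\pi_H(X_w,u)\,\pi_H(Y_w,v)$ of path multiplicities; ``the set of endpoint pairs joined by many $H$-paths'' is a set of pairs, not a product of vertex sets, so one cannot ``apply \eqref{eq:jumbled1} to the relevant subsets'' as stated, and it is not clear a priori that this set is large — ruling out clustering of the paths on few pairs is exactly the content of \eqref{eq:wishfor1}. The paper resolves this by partitioning each $U_j$ (and $V_j$) into $O(\log n)$ level sets $Z_\eta(\vek s_j,X_w)$ according to backward degrees into the previous level set, so that on each class the path count is constant up to $(1+\eta)$-factors, and then applying \eqref{eq:jumbled1} to each of the polylogarithmically many pairs of classes. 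The genuine difficulty is then to show that the accumulated error $\beta\sum_{\vek s,\vek t}\sqrt{|Z_\eta(\vek s,X_w)||Z_\eta(\vek t,Y_w)|}\,(1+\eta)^{\sum\vek s+\sum\vek t}$ is $o\bigl(p(pm)^{2k}\bigr)$; this is Claim~\ref{lem:qxqy}, whose proof needs Fact~\ref{lem:bij_size} and a case distinction on whether the classes are smaller or larger than $p^{1/(2k-1)}m$ — which is precisely where the exponent $p^{1+1/(2k-1)}$ and the factor $\log^{-2(k-1)}n$ in the hypothesis are used. Your sketch asserts that the hypothesis ``gives exactly'' this after the losses accumulate, but that estimate is the heart of the lemma and is not routine bookkeeping.
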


With Lemmas~\ref{lem:partition} and~\ref{lem:main} stated above (and proved in Sections~\ref{sec:partitionlemma} 
and~\ref{sec:mainlemma}, respectively) we proceed by showing how they imply 
Theorem~\ref{thm:main}. 

\begin{proof}[Proof of Theorem~\ref{thm:main}]
Let $k\geq 1$ and $\delta>0$ be given. Without loss of generality, we may assume that $\delta\leq 1/2$. We set
\begin{equation}\label{eq:constants}
\l=2k+1\,,\quad\rho = \l^{-1}\,,\quad\eps =\frac{\delta}{4+32k+6^{2k+1}}\,,\qand
\alpha_0 =1/2+\delta\,,
\end{equation}
and let $\nu$ and $\gamma_1$ be those obtained by applying Lemma~\ref{lem:partition} with $\l,\rho,\eps$, and $\alpha_0$
as these are set in \raf{eq:constants}. Next, let $\gamma_2$ be that obtained by applying Lemma~\ref{lem:main} with $k,\nu,\alpha_0$ and $\eps$, and set 
\begin{equation}\label{eq:gamma}
\gamma = \min \{\gamma_1,\gamma_2,\delta \nu / 4\}.
\end{equation} 
From this point on, Theorem~\ref{thm:main} and Lemmas~\ref{lem:partition} and~\ref{lem:main} are quantified in a similar manner. Hence, given $p =p(n)$, let $n_0$ be sufficiently large as to accommodate Lemmas~\ref{lem:partition} and~\ref{lem:main}.

Let $\Gamma$ be a $(p,\beta)$-jumbled $n$-vertex graph with $\beta$ satisfying \raf{eq:beta}.
To prove Theorem~\ref{thm:main}, it is sufficient to show that every subgraph $G$ of $\Gamma$ satisfying 
$e(G) \geq \alpha_0 p \binom{n}2$ contains a $C_{2k+1}$.  To that end, let $G$ be such a subgraph of $\Gamma$ and let 
$H \subseteq G$ be an $(\alpha,p,\eps)$-degree-regular $C_{2k+1}(m)$-graph given by Lemma~\ref{lem:partition}, where 
$\alpha \geq \alpha_0$ and $m\geq  \nu n$. Next, let $F=F(U_k,V_k) \subseteq E_{\Gamma}(U_k,V_k)$ denote those edges of 
$\Gamma[U_k,V_k]$ met by a member of $\cC(H,\Gamma)$. 
Every edge in $F$ completes a path of length $2k$ in $H$ into a cycle of length $2k+1$. 
In what follows, we prove that $F \cap E_H(U_k,V_k) \not= \emptyset$ which then implies that 
$C_{2k+1} \subseteq H \subseteq G$ completing the proof of Theorem~\ref{thm:main}.

To this end, it is sufficient to show
\begin{equation}\label{eq:F}
|F| \geq \left( \alpha -\frac{\delta}{2} \right) pm^2.
\end{equation}
Indeed, as $H$ is $(\alpha,p,\eps)$-degree-regular and since $\eps< \delta/2$, we obtain
\[
e_H(U_k,V_k) = 
\sum_{v \in U_k}\deg_{H[U_k,V_k]}(v) \geq 
\left(\alpha-\frac{\delta}{2}\right)pm^2.
\]
On the other hand, jumbledness of $\Gamma$ combined with $m\geq \nu n$ and $\gamma \leq \delta \nu/4$ guarantees 
\[e_{\Gamma}(U_k,V_k) \leq pm^2 +\beta m \leq pm^2+\gamma p^{1+\rho} n m \leq \left(1+\delta/2\right)pm^2.
\]
Hence, $e_{\Gamma}(U_k,V_k) <|F|+e_H(U_k,V_k)$
implying that   $|F|$ and  $E_H(U_k,V_k)$ have non-empty intersection.

It remains to show \raf{eq:F}. By definition, each member of $F$ is contained in at most 
$p \left(\alpha+2 \eps \right)^{2k-1} (pm)^{2k-1}$ members of 
$\cC' = \cC(H,\Gamma) \setminus \cO(\alpha+2\eps,H,\Gamma)$ so that  
\begin{equation}\label{eq:F2}
|F| \geq \frac{|\cC'|}{p \left(\alpha+2 \eps \right)^{2k-1} (pm)^{2k-1}}.
\end{equation}
Next, owing to Lemma~\ref{lem:main}, we obtain the estimates
\begin{align}
|\cC(H,\Gamma)| &  \geq \left(\alpha - 2\eps \right)^{2k} (pm)^{2k+1} \label{eq:cC} \\
\intertext{and}
|\cO(\alpha+2\eps,H,\Gamma)| & \leq (3\eps)^{2k}(pm)^{2k+1} \label{eq:cO}.
\end{align}
Combining \raf{eq:F2}, \raf{eq:cC}, and \raf{eq:cO} we obtain that 
$$
|F| \geq \frac{(\alpha-2\eps)^{2k} - (3\eps)^{2k}}{(\alpha+2\eps)^{2k-1}} pm^2.
$$

First, we consider the term 
$$
T = \frac{(\alpha-2\eps)^{2k}}{(\alpha+2\eps)^{2k-1}} = (\alpha-2\eps) 
\left(\frac{\alpha-2\eps}{\alpha+2\eps} \right)^{2k-1}. 
$$
As $\frac{a-b}{a+b} \geq 1 - \frac{2}{a}b$ for any  $a,b>0$ and $\alpha\geq 1/2$, we attain
$$
T \geq (\alpha-2\eps) \left(1- \frac{2}{\alpha}2\eps \right)^{2k-1} \geq (\alpha-2\eps) (1-8\eps)^{2k-1}.
$$
By the Bernoulli inequality $(1-8 \eps)^{2k-1} \geq (1-16 k\eps)$. Then,  since $\delta \leq 1/2$ it follows that
\begin{equation}\label{eq:T}
T \geq \alpha-2\eps-16k\eps.
\end{equation}

Combining the fact that 
$$
\frac{(3\eps)^{2k}}{(\alpha+2\eps)^{2k-1}} \leq 2^{2k-1}3^{2k}\eps \leq 6^{2k} \eps
$$
with \raf{eq:T}, we arrive at 
$$
|F| \geq \left( \alpha-2\eps-16k\eps-6^{2k}\eps \right)pm^2.
$$
As $2\eps +16k \eps+ 6^{2k}\eps \leq \delta/2$, by the choice of $\eps$, \raf{eq:F} follows. 
\end{proof}

\section{Proof of Lemma~\ref{lem:partition}}
\label{sec:partitionlemma}

In this section, we prove Lemma~\ref{lem:partition}. This lemma follows from Lemma~\ref{lem:cleanup} below. Roughly speaking, Lemma~\ref{lem:cleanup} 
asserts that under certain assumptions, a jumbled graph contains a large subgraph with all its 
vertices having almost the same degree.

\begin{lemma}
\label{lem:cleanup}
For all $\rho \geq 0$, $\mu>0$, $0<\eps<\alpha \leq 1$ there exists a $\gamma>0$ such that for every 
sequence of densities $p=p(n)$ there exists an $n_0$ such that for any $n \geq n_0$ 
the following holds.

Let $\Gamma$ be a $(p,\beta)$-jumbled $n$-vertex graph with $\beta=\beta(n) \leq \gamma p^{1+\rho}n$ and let  $G\subset \Gamma$ be a subgraph of $\Gamma$ 
satisfying  $n' = |V(G)|\geq \mu n$ and $e(G)\geq \alpha p\binom{n'}2$. Then, there exists a subset $U\subset V(G)$ of size at least $\eps n'/50$ such that one of the following holds:
\begin{enumerate}[label=\RMlabel]
\item  \label{item:cleanup1} $e(G[U])\geq\left (\alpha+\eps^2/125\right)p\binom{|U|}2$ or
\item \label{item:cleanup2} $\deg_{G[U]}(u)= (\alpha\pm\eps)p|U|$ for all $u\in U$.
\end{enumerate}
\end{lemma}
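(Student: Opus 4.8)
\textbf{Proof proposal for Lemma~\ref{lem:cleanup}.}

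The plan is to iteratively prune vertices of too-small degree, and argue that this process cannot run for long without producing a dense subgraph, so that when it halts we land in case~\ref{item:cleanup2}. Concretely, I would start with $U_0=V(G)$ and, as long as the current set $U_i$ contains a vertex $u$ with $\deg_{G[U_i]}(u)<(\alpha-\eps)p|U_i|$, delete one such vertex to form $U_{i+1}$. (I would not prune for \emph{high} degree: the jumbledness bound~\raf{eq:jumbled2} already forces $\deg_{G[U_i]}(u)\leq p|U_i|+\beta\leq(\alpha+\eps)p|U_i|$ once $|U_i|$ is not too small, since $\beta\leq\gamma p^{1+\rho}n$ and $\gamma$ is tiny relative to $\eps$ and $\mu$; so the upper side of~\ref{item:cleanup2} is automatic and only the lower side needs enforcing.) If the process ever reaches a set $U_i$ of size below $\eps n'/50$ we stop and declare failure of this branch — I will show this branch in fact yields case~\ref{item:cleanup1} on some earlier set. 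Otherwise the process halts at some $U=U_t$ with $|U|\geq\eps n'/50$ and no low-degree vertex, which is exactly~\ref{item:cleanup2} (the matching upper bound coming from jumbledness as above). So the whole content is: \emph{if we prune away more than a $(1-\eps/50)$-fraction of the vertices, then some intermediate $U_i$ satisfies~\ref{item:cleanup1}}.

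To prove that dichotomy I would track the edge count. Deleting a vertex $u$ of degree $<(\alpha-\eps)p|U_i|$ removes fewer than $(\alpha-\eps)p|U_i|$ edges, so the \emph{excess} $e(G[U_i])-\alpha p\binom{|U_i|}{2}$ does not drop by much; more precisely, writing $s=|U_i|$ and passing to $s-1$ vertices, the density relative to $\binom{s-1}{2}$ goes \emph{up} by roughly $\frac{\alpha-(\alpha-\eps)}{s}\,p\cdot 2=\frac{2\eps p}{s}$ to leading order (since $\binom{s}{2}-\binom{s-1}{2}=s-1$ and we removed $<(\alpha-\eps)p\,s$ edges but ``needed'' only to remove $\alpha p(s-1)$ to keep the same relative density). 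Summing this gain over the pruning steps from $s=n'$ down to $s=\eps n'/50$ gives a telescoping/harmonic sum $\sum 2\eps p/s\gtrsim 2\eps p\log\frac{50}{\eps}$; choosing the constant $125$ appropriately, this exceeds $\eps^2/125\cdot p$, i.e.\ the relative edge density must have crossed the threshold $(\alpha+\eps^2/125)p$ at some intermediate step, giving~\ref{item:cleanup1}. (One must be a little careful because $\alpha$ can be as large as $1$, so the ``density'' cannot actually exceed $1$; but the claimed increment $\eps^2/125$ is small, so as long as we reach it before $s$ gets tiny there is no contradiction — and if $\alpha$ is already so close to $1$ that this is impossible, then $G$ is itself essentially complete and~\ref{item:cleanup2} holds trivially for $U=V(G)$ via jumbledness. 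This edge case I would dispatch separately at the start.)

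The main obstacle I anticipate is making the bookkeeping of the excess edge density both correct and clean simultaneously: one is comparing $e(G[U_i])$ against two different denominators $\binom{s}{2}$ as $s$ decreases, the pruning bound is one-sided, and the target increment $\eps^2/125$ must be extracted from a logarithmic sum while the set size is only allowed to shrink by a factor $50/\eps$ — so the constants have to be chosen in the right order ($\gamma$ last, depending on $\eps,\alpha,\mu,\rho$; $n_0$ last of all). I would set it up by defining $a_i = e(G[U_i])/\binom{|U_i|}{2} - \alpha p$ and showing $a_{i+1}\geq a_i + \tfrac{(\eps-o(1))p}{|U_i|}$ at every pruning step (the $o(1)$ absorbing lower-order terms and the jumbledness error, valid since $p\gg\log n/n$ is \emph{not} assumed here — only $\beta\leq\gamma p^{1+\rho}n$ — so I would instead phrase everything in terms of $\beta$ and note $|U_i|\geq\eps n'/50\geq\eps\mu n/50$ keeps $\beta/(p|U_i|)$ below $\eps/10$, say). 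Then either the process halts early with~\ref{item:cleanup2}, or $a_t\geq \sum_{s=\lceil\eps n'/50\rceil}^{n'}\tfrac{(\eps-o(1))p}{s}\geq (\eps-o(1))p\log(50/\eps)/1 \geq \eps^2 p/125$ for the first index $t$ where $|U_t|$ drops to $\lceil \eps n'/50\rceil$, and this $U_t$ (or the last index before the bound is exceeded) witnesses~\ref{item:cleanup1}. The size condition $|U|\geq \eps n'/50$ is then satisfied in both outcomes by construction.
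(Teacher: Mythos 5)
There is a genuine gap, and it sits exactly at the point you wave away in your first paragraph: the upper bound $\deg_{G[U]}(u)\leq(\alpha+\eps)p|U|$ in~\ref{item:cleanup2} is \emph{not} automatic from jumbledness. Two problems. First, the jumbledness estimate for a single vertex $u$ against a set $Y$ comes from \raf{eq:jumbled1} with $|X|=1$ and reads $\deg_{\Gamma}(u,Y)\leq p|Y|+\beta\sqrt{|Y|}$, not $p|Y|+\beta$; with $\beta\leq\gamma p^{1+\rho}n$ and $|Y|$ linear in $n$ the error term $\beta\sqrt{|Y|}$ is of order $\gamma p^{1+\rho}n^{3/2}$, which is \emph{not} $o(p|Y|)$ unless $p$ is very small, so even the bound $\deg_{\Gamma}(u,Y)\leq(1+o(1))p|Y|$ does not follow from the hypotheses. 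Second, and more fundamentally, even a perfect bound $\deg_{\Gamma}(u,U)\approx p|U|$ would not give what~\ref{item:cleanup2} demands: $G$ is an \emph{arbitrary} subgraph of $\Gamma$ with density $\alpha p$, and $\alpha$ may be well below $1$ (in the application $\alpha\approx 1/2+\delta$). A vertex of $G$ may retain all of its $\Gamma$-neighbours and thus have degree close to $p|U|$ in $G[U]$, which exceeds $(\alpha+\eps)p|U|$ whenever $\alpha+\eps<1$. So the high-degree side cannot be dispatched by jumbledness of $\Gamma$ at all; it is precisely where the negation of~\ref{item:cleanup1} must enter. In the paper's proof this is the bulk of the work: assuming~\ref{item:cleanup1} fails, the set $X_1$ of vertices of degree at least $(\alpha+4\eps/5)pm$ in the min-degree-cleaned set $W$ is shown to be small by comparing $e_G(W)\leq(\alpha+\eps^2/125)p\binom{|W|}{2}$ with the minimum-degree lower bound, and then one must control the cascade caused by deleting $X_1$ (deletions can ruin the minimum degree), which is done by iteratively defining sets $X_2,X_3,\dots$ of vertices with high degree into the previously deleted set and bounding them geometrically via Fact~\ref{lem:bij_size}; the parameter $\rho$ and the choice $t^*=\tfrac{1}{2\rho}+1$ exist exactly to terminate this cascade. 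None of this mechanism appears in your proposal.

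Your first stage -- iteratively deleting vertices of degree below $(\alpha-\eps)p|U_i|$ and arguing, via a density/excess bookkeeping with harmonic-sum gain $\sum 2\eps p/s\gtrsim 2\eps p\log(50/\eps)\gg\eps^2p/125$, that either the process stops with a large min-degree set or some intermediate set witnesses~\ref{item:cleanup1} -- is essentially sound and is a reasonable variant of how the paper obtains its set $W$ (the paper instead bounds the total number of deleted edges and applies \raf{eq:jumbled2} to $e_{\Gamma}(W)$ to lower-bound $|W|$ directly). But that stage only yields one-sided degree control, and the two-sided conclusion~\ref{item:cleanup2} is the whole point of the lemma, so as written the proposal does not prove the statement.
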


To prove Lemma~\ref{lem:cleanup}, we shall make use of the following property immediately deduced from jumbledness. 

\begin{fact}
\label{lem:bij_size}
Let $\Gamma$ be a $(p,\beta)$-jumbled graph. If $X$, $Y \subseteq V(\Gamma)$ are 
disjoint and they satisfy $e_{\Gamma}(X,Y)\geq k|X|\neq p|Y||X|$, then 
\begin{equation}\label{eq:bij_size}
|X|\leq \frac{\beta^2|Y|}{(k-p|Y|)^2}.
\end{equation}
\end{fact}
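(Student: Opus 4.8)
The plan is to deduce \eqref{eq:bij_size} directly from the jumbledness inequality \eqref{eq:jumbled1}, by treating the hypothesis $e_\Gamma(X,Y)\geq k|X|$ as forcing the upper-tail side of the jumbledness bound to be large. Since $X$ and $Y$ are disjoint, jumbledness gives
\[
e_\Gamma(X,Y)\leq p|X||Y|+\beta(|X||Y|)^{1/2}\,.
\]
Combining this with $e_\Gamma(X,Y)\geq k|X|$ yields
\[
k|X|\leq p|X||Y|+\beta(|X||Y|)^{1/2}\,,
\]
i.e.\ $(k-p|Y|)|X|\leq\beta(|X||Y|)^{1/2}$.

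First I would note that the hypothesis $e_\Gamma(X,Y)\geq k|X|> p|Y||X|$ (this is where the strict inequality $k|X|\neq p|Y||X|$, together with $e_\Gamma(X,Y)\geq k|X|$, is used: it guarantees $k-p|Y|>0$, so we may divide by it without reversing the inequality). Dividing the displayed bound by $(k-p|Y|)$ and by $|X|^{1/2}$ gives
\[
(k-p|Y|)|X|^{1/2}\leq \beta|Y|^{1/2}\,,
\]
and then squaring both sides and dividing by $(k-p|Y|)^2$ produces exactly
\[
|X|\leq\frac{\beta^2|Y|}{(k-p|Y|)^2}\,,
\]
which is \eqref{eq:bij_size}. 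This is the whole argument; there is essentially no obstacle here, as the statement is just a rearrangement of one half of the defining inequality of $(p,\beta)$-jumbledness. The only point requiring a word of care is the sign of $k-p|Y|$, which is precisely why the hypothesis is stated with the combination ``$e_\Gamma(X,Y)\geq k|X|\neq p|Y||X|$'': it simultaneously ensures $k|X|>p|Y||X|$ (so the denominator is positive and the division is legitimate) and feeds the lower bound $k|X|$ into the jumbledness estimate.

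One remark on robustness: one could equally phrase the intermediate step using $\vol(X,Y)$ rather than $|X||Y|$, invoking the general form of the definition, but since $X,Y$ are disjoint the two coincide and the specialized inequality \eqref{eq:jumbled1} is the cleanest tool. No auxiliary lemma, choice of constants, or asymptotics is needed; in particular the ``there exists $n_0$'' machinery of the surrounding lemmas does not enter, and the fact holds for every $n$.
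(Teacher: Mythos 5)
Your computation is exactly the intended one: the paper records this Fact without proof, as an immediate rearrangement of the jumbledness bound for disjoint sets, and your chain $k|X|\leq p|X||Y|+\beta(|X||Y|)^{1/2}$, division by $|X|^{1/2}$, and squaring is the whole content. The one point where your write-up goes astray is the parenthetical claim that $e_\Gamma(X,Y)\geq k|X|$ together with $k|X|\neq p|Y||X|$ \emph{guarantees} $k-p|Y|>0$. It does not: a lower bound on $e_\Gamma(X,Y)$ says nothing about how $k$ compares with $p|Y|$, and the ``$\neq$'' only excludes equality, so both hypotheses are perfectly compatible with $k<p|Y|$. The positivity of $k-p|Y|$ is genuinely needed: if $k<p|Y|$ the left-hand side of $(k-p|Y|)|X|\leq\beta(|X||Y|)^{1/2}$ is negative, squaring is not legitimate, and the conclusion can fail outright (for instance, in a $d$-regular graph with strong jumbledness, split the vertices into two halves $X$ and $Y$ and take $k=1$: then $e_\Gamma(X,Y)\geq |X|$ and $1\neq p|Y|$, while $\beta^2|Y|/(1-p|Y|)^2$ is far smaller than $|X|$). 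So the hypothesis should be read, or restated, as $e_\Gamma(X,Y)\geq k|X|>p|Y||X|$, and this is indeed how the Fact is applied in the paper: in the proof of Lemma~\ref{lem:cleanup} the degree threshold $\tfrac{\eps}{2^{t+2}}pm$ always exceeds $p|X_{t-1}|$, and in Claim~\ref{lem:qxqy} inequality \eqref{eq:bij_size} is invoked only in the case $(1+\eta)^{s_j}>2p|Z_{\eta}(\vek s_{j-1},X)|$. With that stronger reading stated explicitly (rather than ``deduced'' from the hypotheses as written), your proof is complete and coincides with the argument the paper has in mind.
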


\begin{proof}[Proof of Lemma~\ref{lem:cleanup}]
Given $\rho$,  $\mu$, $\eps$, and $\alpha$, set 
\begin{equation}\label{eq:constants2}
t^* = \frac{1}{2\rho}+1\,,\qquad\xi= \frac{8\eps^2}{25^2}\,,\qqand \gamma=\frac{\mu^2\eps^2\xi}{2^{4t^*+1}}\,.
\end{equation}
Given $p=p(n)$, let $n_0$  be sufficiently large, and let $\Gamma$ and $G$ be as stated. 
We suppose that~\ref{item:cleanup1} is not satisfied and show that~\ref{item:cleanup2} holds. 

We begin by passing to a large subgraph with a lower bound on its minimum degree. More precisely, we show that  
\begin{equation}\label{eq:U1}
\mbox{there exists a $W\subset V(G)$ with $|W|\geq \sqrt{\frac{\xi}{2}}n'$ and $\delta(G[W]) \geq (\alpha-\xi)p|W|$,}
\end{equation}
where $\delta(J)$ denotes the minimum degree of the graph $J$. 

Let $\{V_{n'},V_{n'-1}, \dots,V_{m}\}$ be a maximum sequence of vertex sets where $V_{n'} = V(G)$  and  let
 $V_{t-1}$ be obtained from $V_t$ by removing a  vertex in $V_t$  of degree less than 
 \[(\alpha-\xi)p|V_t| = (\alpha-\xi)pt\,.\] 
Given such a sequence, we set $W = V_{m}$. It remains to show that $|W|=m \geq \left(\xi/2\right)^{1/2}n'$. Indeed,
$$
\alpha p\binom{n'}2\leq e(G) \leq (\alpha-\xi)p\left(\sum_{t\in[n']}t\right)+e_{\Gamma}(W);
$$
and since $e_{\Gamma}(W) \leq p \binom{|W|}2 + \beta|W|$, we attain that 
$$
\alpha  \binom{n'}2  \leq (\alpha -\xi) \binom{n'+1}2 +  \binom{|W|}2 + \gamma p^{\rho}n |W|.
$$
As $n' \geq \mu n$, then $\gamma p^{\rho}n |W| \leq \frac{\gamma}{\mu} (n')^2$. 
Hence, for $n'$ sufficiently large (i.e., $n_0$ sufficiently large) we have
$$
\frac{\xi}{3} (n')^2 \leq \frac{|W|^2}{2} + \frac{\gamma}{\mu}(n')^2.
$$ 
Isolating $|W|^2$ then yields 
$$
2 \left( \frac{\xi}{3} - \frac{\gamma}{\mu} \right)(n')^2 \leq |W|^2. 
$$
Assertion \raf{eq:U1} now follows as $\gamma \leq \xi \mu /12$.                              

To handle the maximum degree we will repeatedly delete vertices with too high degree 
and show that this hardly effects the size of $W$ and the 
minimum degree condition obtained above.
We make this precise now.

Recall that $m=|W|$, let 
$$
X_1 =\big\{u\in W\colon \deg_{G[W]}(u)\geq (\alpha + 4\eps/5)p m\big\},
$$
and note that 
\begin{equation}\label{eq:X1}
|X_1| \leq \frac{\eps}{25}m.
\end{equation}
Indeed, $e_G(W) \leq \left( \alpha +\eps^2/125\right) p \binom{m}2$ as  assumption (I) does not hold. Then,
\begin{align*}
\left( \alpha +\frac{\eps^2}{125} \right) pm^2 
          &\geq 2 |E_G(W)| = \sum_{u \in W \setminus X_1} \deg_{G[W]}(u) +\sum_{x \in X_1} \deg_{G[W]}(x) \\
       \; & \geq (\alpha-\xi)pm(m-|X_1|) + \left(\alpha+\frac{4\eps}5\right)pm|X_1| \\ 
       \; & = (\alpha -\xi)pm^2+\left(\frac{4\eps}5+\xi\right)pm |X_1|;
\end{align*}
so that 
$$
|X_1| \leq \left( \frac{\eps^2/125+\xi}{4\eps/5 +\xi}\right) m \overset{\raf{eq:constants2}}{\leq} \frac{\eps}{25} m.
$$ 
 
Hence, by removing the set $X_1$ we have a guaranteed upper bound on the maximum degree.
However, the minimum degree of $G[W\setminus X_1]$ may now be lower than the bound requested in (II)
and in the remainder of the proof we focus on handling this issue.
   
To this end, we define a sequence of sets $(X_2, X_3,\dots)$. The set of vertices 
$X_2 \subseteq W \setminus X_1$ consists of those vertices whose degree in $G[W]$ into $X_1$ is ``too high''. 
In a similar manner, for $t>2$, we define
$X_t$ to be the subset of the remaining vertices having ``too high'' degree into $X_{t-1}$. 
We will show that such a sequence has constant length and that $\sum_{t\geq 2} |X_t|$ is 
negligible so that 
discarding all these sets does not affect the degrees of the remaining vertices. 

For $t \geq 2$, put 
$$
X_t = \left\{u \in W\setminus \bigcup_{j=1}^{t-1}X_j : \deg_{G[W]}(u,X_{t-1}) \geq \frac{\eps}{2^{t+2}} pm \right\}.
$$
In what follows, we prove by induction on $i$ that 
\begin{equation}\label{eq:Xi}
|X_t| \leq \gamma^{t-1} p^{2(t-1)\rho} m, \; \mbox{for all}\;  2 \leq t \leq t^*.
\end{equation}
Indeed, $|X_2|$ satisfies,
\begin{align*}
|X_2| \overset{\raf{eq:bij_size}}{\leq} \frac{\beta^2 |X_1|} {\left(\frac{\eps}{8} pm -p |X_1|\right)^2 }
       \leq \frac{ \frac{\eps}{25}m \gamma^2 p^2 p^{2 \rho} n^2}{\left(\frac{\eps}{8} - \frac{\eps}{25} \right)^2 p^2m^2} 
      \leq \left( \frac{\gamma}{\mu \frac{\eps}{25}\sqrt{\frac{\xi}{2}} } \right)^2 p^{2\rho} m 
      \overset{\raf{eq:constants2}}{\leq} \gamma p^{2\rho} m,
\end{align*}
where the third inequality holds due to $n^2 \leq \frac{m^2}{\mu^2 \xi/2}$. Consequently, \raf{eq:Xi} holds for $t=2$. 

For $t\geq 2$ we conclude from  \raf{eq:bij_size} that
$$
|X_{t+1}| \leq \frac{\beta^2 |X_t|}{\left(\frac{\eps}{2^{t+3}}pm - p|X_t| \right)^2}
$$ 
Substituting $|X_t|$ with the inductive hypothesis yields
\begin{align*}
|X_{t+1}| \leq \frac{ \gamma^2 p^2 p^{2\rho} n^2 \gamma^{t-1} p^{2 (t-1)\rho} m }{ \left(\frac{\eps}{2^{t+3}} - \gamma^{t-1}p^{2 (t-1) \rho} \right)^2 p^2m^2} 
\leq \frac{\gamma^2 \gamma^{t-1}p^{ 2 t \rho} m }{ \left(\frac{\eps}{2^{t+3}} - \gamma^{t-1}p^{2(t-1)\rho} \right)^2  \mu^2 \frac{\xi}{2}}.
\end{align*}
As by the choice of $\gamma$, the inequality
$$
\frac{\eps}{2^{t+3}} - \gamma^{t-1}p^{2(t-1) \rho} \geq \frac{\eps}{2^{t+4}}
$$ 
holds for each $2\leq t \leq t^*$, we reach 
$$
|X_{t+1}| \leq \frac{\gamma^2\gamma^{t-1}}{\left( \mu \frac{\eps}{2^{t+4}}  \sqrt{\frac{\xi}{2}}\right)^2  } p^{2 t \rho} m \overset{\raf{eq:constants2}}{\leq} \gamma^t p^{2 t \rho} m.
$$
This concludes our proof of \raf{eq:Xi}. \\

Next, we show that the length of the sequence $(X_t)$ is constant. In particular, we show that this sequence has length at most $t^* = \frac{1}{2\rho}+1$. To see this, observe that $X_{t+1}$ is empty if 
$|X_t| < \frac{\eps}{2^{t+2}} pm$. By \raf{eq:Xi}, the latter is satisfied if 
\begin{align}\label{eq:stop1}
\gamma^{t-1} p^{2(t-1) \rho} m & < \frac{\eps}{2^{t+2}} pm.\\
\intertext{For $t^*$, \raf{eq:stop1} is satisfied provided}
\gamma^{t^*-1} & \leq \frac{\eps}{2^{t^*+2}}, \nonumber
\end{align}
which indeed holds due to the choice of $\gamma$. 

In the remainder of the proof we show that we may choose $U = W \setminus \bigcup_{t=1}^{t^*} X_t$, so that
$|U| \geq \frac{\eps}{50} n'$ and  $G[U]$ satisfies (II). 
Observe that by the choice of $\gamma$ we have \begin{align*}
|U| & = |W| - |X_1| - \sum_{t=2}^{t^*} |X_t| \geq |W|-|X_1| - (t^*-1)|X_2| \\
\; & \geq m - \frac{\eps}{25}m -\frac{1}{2\rho} \gamma p^{2\rho} m  \geq 
\left(1 - \frac{3\eps}{50}\right) m.
\end{align*}
In particular, 
$|U| \geq \frac{\eps}{50} n'$ holds, as required. 

It remains to verify that $G[U]$ satisfies (II).  
We begin with the maximum degree of a vertex $v \in U$. Such a vertex satisfies 
$$
\deg_{G[U]}(v) < (\alpha + 4\eps/5) pm \leq (\alpha +4\eps/5)p \frac{|U|}{\left(1-\frac{3\eps}{50}\right)} 
\leq (\alpha+\eps)p|U|.
$$

Finally, we consider the minimum degree of a vertex $v \in U$.
Note that the choice of $\gamma$ and $t^*$ and \raf{eq:Xi} yield  $|X_{t^*}| \leq \frac{\eps}{50} pm$. Hence, we obtain
\begin{align*}
\deg_{G[U]}(v) \geq & \left(\alpha-\xi\right)pm -|X_{t^*}| -\frac{\eps}{4} pm 
\left(  \sum_{t=2}^{t^*} \frac{1}{2^t} \right)\\\geq& \left(\alpha-\xi - \frac{\eps}{50} - \frac{\eps}{4}\right)pm\geq(\alpha-\eps)p|U|
\end{align*}
which concludes the proof of Lemma~\ref{lem:cleanup}.
\end{proof}

Lemma~\ref{lem:partition} follows from Lemma~\ref{lem:cleanup} and a standard concentration result for the hypergeometric distribution. 

\begin{proof}[Proof of Lemma~\ref{lem:partition}]
Given $\l,\rho,\eps$, and $\alpha_0$, set 
\begin{equation*}
\eps_1 = \eps/4\,,\qquad\mu=\left(\eps_1/50\right)^{100 \eps_1^{-2}}\,,\qqand\nu= \mu/\l,
\end{equation*}
let $\gamma'$ be that obtained by applying Lemma~\ref{lem:cleanup} with $\rho,\eps_1$, and $\alpha_0$, and 
put
\begin{equation*}
\gamma = \min \{\gamma',\mu\}.
\end{equation*}
Given $p=p(n)$, let $n_0$ be sufficiently large as to accommodate Lemma~\ref{lem:cleanup}. 
For $n \geq n_0$, let~$\Gamma$ be an $n$-vertex $(p,\beta)$-jumbled graph, where $\beta$ is as specified in Lemma~\ref{lem:partition}, and let $G \subseteq \Gamma$ be a subgraph of $\Gamma$ satisfying $e(G)\geq \alpha_0 p\binom{n}2$.

To prove Lemma~\ref{lem:partition}, we shall first pass to a subgraph of $G$ that is essentially degree regular and of order linear in $n$. We then show that a random equipartition of such a subgraph is highly likely to be an $(\alpha,p,\eps)$-degree-regular $C_{\ell}(\nu n)$-graph for some $\alpha \geq \alpha_0$.  

In what follows, we show that there exists an $\alpha \geq \alpha_0$ and a set $U \subseteq V(G)$ satisfying
\begin{enumerate}[label=\rmlabel]
\item  $|U| \geq \mu n$ and
\item  $\deg_{G[U]}(u) = (\alpha \pm \eps_1)p|U|$  for each $u \in U$.
\end{enumerate}
Roughly speaking, to prove this we shall repeatedly apply Lemma~\ref{lem:cleanup} starting from $G_0=G$ and nested subgraphs thereof until assertion~\ref{item:cleanup2} of that lemma holds. 
Due to jumbledness such an iteration gives rise to a sequence $(G_0,\ldots,G_t)$ of nested subgraphs of $G_0$ where $t$ is a constant. We shall then set $U = V(G_t)$. We now make this precise.   

Set $G_0 = G$. For $i >0$, let $G_i \subseteq G_{i-1}$ be the subgraph of $G_{i-1}$ obtained by assertion (I) of Lemma~\ref{lem:cleanup} so that
\begin{align*}
|V(G_i)| & \geq \frac{\eps_1}{50} |V(G_{i-1})|\\
\intertext{and}
e(G_i) & \geq \left( \alpha_0 +i \frac{(\eps_1)^2}{25} \right) p \binom{|V(G_i)|}2.
\end{align*}
Owing to the jumbledness and the assumption that $e(G_0) \geq \alpha_0p \binom{|V(G_0)|}2$, it holds that $|V(G_0)|\geq \mu n$. 
Consequently, a sequence of the form $(G_0,G_1,\ldots)$ exists and we let $(G_0,\ldots,G_t)$ denote 
a maximal such sequence. Jumbledness with $\beta\leq \gamma pn$ yields
$$
\left(\alpha_0 + t\frac{\eps_1^2}{25}\right) p \binom{\mu n}2 \leq p \binom{\mu n}2 + \beta \mu n, 
$$
so that $t \leq \frac{100 \gamma}{\mu \eps_1^2} \leq \frac{100}{\eps_1^2}$. 
The existence of $\alpha$ as required follows.  

In the remainder of the proof, we show that a random equipartition of $G[U]$ is highly likely to be an 
$(\alpha,p,\eps)$-degree-regular $C_{\ell}(\nu n)$-graph. Without loss of generality, we may assume that 
$|U| = \ell \nu n$ (and thus divisible by $\ell$) and that 
$\deg_{G[U]}(u) = (\alpha \pm 2 \eps_1) p |U|$ for each $u \in U$. 
Indeed, a set $U' \subseteq U$ with $|U'| = \l \nu n \geq \frac{\mu}{2} n $ can be obtained by removing at most 
$\ell-1$ vertices from $U$. A vertex $u \in U'$ satisfies 
$\deg_{G[U']} = (\alpha \pm 2 \eps') p |U'|$ since $\l \leq \eps' p |U|$ for $n$ (i.e., $n_0$) 
sufficiently large and $p\gg n^{-1}$.  

Let $U=U_1\dcup\dots\dcup U_{\l}$ be a random equipartition of $U$ consisting of $\l$ sets each of size~$\nu n$. 
Call a vertex $v\in U$ {\sl bad} if there exists an index $j\in[\l]$ such that $v\not\in U_j$ and 
$$\big|\deg(v,U_j)-\alpha p|U_j|\big|>\eps p|U_j|.$$
Next, for two distinct indices $i, j\in[\l]$ and a vertex $v\in U_i$, let 
$X=X_v^j=\deg(v, U_j)$ denote the degree of $v$ into $U_j$ in $G[U]$. The random variable $X$ is hypergeometrically distributed with mean 
$$\ex{X}=\frac{(\alpha\pm 2 \eps_1)p|U| |U_j|}{|U|} = (\alpha\pm 2 \eps_1)p|U_j|.$$
For hypergeometrically distributed random variable the following is a well-known concentration result (see e.g.,~\cite{JLR}*{Theorem 2.10 and Equation (2.9)})
\begin{align*}
\pr{|X-\ex{X}|\geq \eta \ex{X}}\leq 2\exp(-\eta\ex{X}/3) \quad \text{ for } \eta\leq 3/2. 
\end{align*}
Consequently, the probability that a fixed vertex $v$ is bad is given by 
\begin{align*}
\pr{\mbox{$v$ is bad}} & \leq \l \pr{|X- \ex{X}|> (\eps-2\eps_1)p \nu n}
                                        \leq \l \pr{|X - \ex{X} | > \frac{\eps-2\eps_1}{\alpha+2\eps_1} \ex{X}} \\
                               \;   & \leq 2 \l  \exp\left(- \frac{\eps - 2\eps_1}{3(\alpha+2\eps_1)} \ex{X}\right).
\end{align*}
Hence, for $n$ sufficiently large since $p \gg \ln n /n$ we have that 
\begin{align*}
\pr{\mbox{there exists a bad vertex in $U$}} \leq 2 \l |U|  \exp \left(- \frac{\eps - 2\eps_1}{3(\alpha+2\eps_1)} \ex{X}\right) <1,
\end{align*}
implying that there exists an equipartition of $U$ yielding no bad vertices. Such a partition, with possible redundant edges removed,  
forms an $(\alpha,p,\eps)$-degree-regular $C_{\l}(\nu n)$-graph,
as required.
\end{proof}

\section{Proof of Lemma~\ref{lem:main}}
\label{sec:mainlemma}

In this section we prove Lemma~\ref{lem:main}. Throughout this section, $\Gamma$ denotes an $n$-vertex $(p,\beta)$-jumbled graph and $H$ denotes an $(\alpha,p,\eps)$-degree-regular $C_{2k+1}(m)$-graph that is a subgraph of $\Gamma$. We assume that the graph $H$ has a partition $(U_k,\dots,U_1,W,V_1,\dots,V_{k})$ of its vertex set (see Section~\ref{sec:proof-main}). 

Lemma~\ref{lem:main} has two parts the first of which concerns $\cC(H,\Gamma)$. Recall that this set consists of all $(2k+1)$-cycles of the form $(u_k,\dots,u_1,w,v_1,\dots,v_k)$, where $u_i\in U_i$, $w\in W$, $v_i\in V_i$, the edge $u_k v_k$ is an edge of $\Gamma[U_k,V_k]$, and the remaining edges are that of $H$.
The  first part of the lemma (see \raf{eq:wishfor1}) asserts that 
\[
|\cC(H,\Gamma)| \geq \left(\alpha-2\eps\right)^{2k}(pm)^{2k+1}\,.
\]
Observe that the fact that $H$ is almost degree regular implies that the number of paths of 
the form $(u_k,\dots,u_1,w,v_1,\dots,v_k)$ in $H$ is  $((\alpha\pm\eps) pm)^{2k}$. As $H$ is an arbitrary subgraph of $\Gamma$ it may occur that this set of paths ``clusters'' on a small number of pairs of vertices $(u_k,v_k) \in U_k \times V_k$. The lower bound stated above 
asserts that this is not the case. In fact, approximately a $p$ proportion of these paths extend to cycles in $\cC(H,\Gamma)$ as one would expect in a purely random setting. 

The second part of  Lemma~\ref{lem:main} concerns the set $\cO(\alpha+2\eps,H,\Gamma)$ of $(\alpha+2\eps)$-saturated cycles. This set consists of all cycles $(u_k,\dots,u_1,w,v_1,\dots,v_k)$ in 
 $\cC(H,\Gamma)$ for which the edge $u_k v_k \in E(\Gamma[U_k,V_k])$ is $(\alpha+2\eps)$-saturated, meaning that it is contained in at least $p((\alpha+2\eps) pm)^{2k-1}$ members of $\cC(H,\Gamma)$. In \raf{eq:wishfor2}, the second part of the lemma, an upper bound is put forth for $|\cO(\alpha+2\eps,H,\Gamma)|$ asserting that the latter is negligible compared to~$|\cC(H,\Gamma)|$. The point here is that for $|\cC(\Gamma,H)|\geq(\alpha-2\eps)^{2k} (pm)^{2k+1}$ (\raf{eq:wishfor1} will yield this) and $e_{\Gamma}[U_k,V_k]$ is approximately $pm^2$, we expect an edge of $\Gamma[U_k,V_k]$ to be contained in roughly at least $p(\alpha-2\eps)^{2k}(pm)^{2k-1}$ members of $\cC(H,\Gamma)$. 
 Hence, for a small $\eps$, the number of cycles in  $\cC(H,\Gamma)$ an $(\alpha+2\eps)$-saturated 
 edge is contained in
 overshoots this expectated lower bound by a factor of roughly $1/\alpha$. 
 The second part of Lemma~\ref{lem:main} asserts that the contribution of these saturated edges is negligible compared
 to $|\cC(H,\Gamma)|$

In the sequel we introduce a framework that will enable us to handle estimates for both~$|\cC(H,\Gamma)|$ and $|\cO(\mu,H,\Gamma)|$ using essentially the same type of arguments. Prior to this framework, we include here a brief and rough sketch of our approach for $k=3$ in which~$H$ has the partition $(U_3,U_2,U_1,W,V_1,V_2,V_3)$.

For a vertex $w \in W$, we write $\cC(H,\Gamma,w)$ to denote the cycles in $\cC(H,\Gamma)$ that contains $w$. Clearly, an estimate for such a contribution will translate into an estimate for $|\cC(H,\Gamma)|$. To estimate $\cC(H,\Gamma,w)$,
 we shall repeatedly apply the jumbledness condition \eqref{eq:jumbled1}  to pairs of subsets $(L,R)$ where $L \subseteq U_3$ and $R \subseteq V_3$ consist 
 of certain vertices connected to~$w$ by a path of length three. 
 The framework to be introduced will ensure us that in each such application of the jumbledness condition to a 
 pair $(L,R)$, the edges in $\Gamma[L,R] \subseteq \Gamma[U_k,V_k]$ are contained in 
 roughly the same number of cycles in $\cC(H,\Gamma,w)$. 

To achieve this level of control, consider the neighborhood $X_w=N_H(w)\cap U_1$ of $w$ in~$U_1$. 
We partition the set $U_2$ according to the ``backward'' degrees of its vertices into~$X_w$. 
More precisely, the $i$th partition class will consist of all vertices $u\in U_2$ satisfying
\[(1+\eta)^{i-1}\leq |N_H(u)\cap X_w|<(1+\eta)^{i}\] 
for some small $\eta$ to be chosen later on.
Some of these classes may be empty. Nevertheless, these classes cover $U_2$ up to the vertices not connected to 
$X_w$. Moreover, there are  at most $\lceil \log_{1+\eta} n\rceil+1$ such classes.

We proceed to $U_3$ in a similar manner. Each partition class of $U_2$ will define a partition of~$U_3$. The latter is defined in a similar manner to the partition just defined for $U_2$ using $X_w$.  More precisely,  given the $i$-th partition class of $U_2$, say,  $Z_{\eta}(i,X_w)$, we assign a vertex $u\in U_3$ to the $j$-th partition class of $U_3$ if it satisfies $(1+\eta)^{j-1}\leq |N_H(u)\cap Z_{\eta}(i,X_w)|<(1+\eta)^{j}$. The resulting partition class is denoted    
$Z_{\eta}(i,j,X_w)$.

We partition the sets $V_2$ and $V_3$ in a similar manner where we use $Y_w=N_H(w)\cap V_1$, the neighborhood of $w$ in $V_1$, instead  of $X_w$. 

The advantage of this kind of partitioning is that the number of paths  between $w$ and any vertex $u\in Z_{\eta}(i,j,X_w)\subset U_3$, confined to the set $Z_{\eta}(i,X_w)$ -
i.e.\ paths of the form   $(w,u',u)$ where $u' \in Z_{\eta}(i,X_w)$ -
is at least $(1+\eta)^{i+j-2}$ and at most $(1+\eta)^{i+j}$.  As a result, this number is known up to a factor of $(1+\eta)^2$. Naturally, the same bounds hold for~$w$ and any  
vertex $v\in Z_{\eta}(i',j',Y_w)\subset V_3$, where $Z_{\eta}(i',j',Y_w)$ is the set
obtained by the  partitioning procedure with respect to $Y_w$, $V_2$, and $V_3$. 

Now, if we take a path from $w$ to $u\in Z_{\eta}(i,j,X_w)$, confined to $Z_{\eta}(i,X_w)$, and another path from $w$ to $v\in Z_{\eta}(i',j',Y_w)$, confined to $Z_{\eta}(i',X_w)$, then these two paths yield a path from $u$ to $v$. Hence, the number of such $(u,v)$-paths is at least 
$(1+\eta)^{i+i'+j+j'-4}$ and at most $(1+\eta)^{i+i'+j+j'}$.

Since $u$ and $v$ were arbitrary vertices from $Z_{\eta}(i,j,X_w)$ and $Z_{\eta}(i',j',Y_w)$, respectively, we conclude that
if $uv$ is an edge of $\Gamma$, then $(1+\eta)^{i+i'+j+j' \pm4}$ is  the number of cycles in~$\cC(H,\Gamma,w)$ that contain 
the edge $uv$ and are confined to $Z_{\eta}(i,X_w)$ and $Z_{\eta}(i',X_w)$.

On the other hand, jumbledness of $\Gamma$ yields that the number of edges in $\Gamma$ spanned between~$Z_{\eta}(i,j,X_w)$ and $Z_{\eta}(i',j',Y_w)$ is  
\begin{equation*}
	p|Z_{\eta}(i,j,X_w)||Z_{\eta}(i',j',Y_w)|\pm\beta\sqrt{|Z_{\eta}(i,j,X_w)||Z_{\eta}(i',j',Y_w)|}.\end{equation*}
Summing over all $1 \leq i,j,i',j'\leq \lceil\log_{1+\eta}n\rceil +1$, we obtain a good estimate for  $|\cC(H,\Gamma,w)|$.
Indeed, the main term of the contribution of $w$ is
\[\sum_{i,j,i',j'}p|Z_{\eta}(i,j,X_w)||Z_{\eta}(i',j',Y_w)|(1+\eta)^{i+j+i'+j'\pm4}.\]
This, we will show to be  
\begin{equation}\label{eq:example-main}
|X_w||Y_w|p((\alpha\pm\eps)pm)^{4},
\end{equation}
by a simple argument.
Note that we also obtain an upper bound here which will turn out important later.
The main obstacle will be to show that the error term  
is negligible compared to the main term, i.e., that 
\begin{equation}\label{eq:example-error}\sum_{i,j,i',j'}\beta\sqrt{|Z_{\eta}(i,j,X_w)||Z_{\eta}(i',j',Y_w)|}(1+\eta)^{i+j+i'+j'+4}=o(p(pm)^{6}),\end{equation}
 provided $\Gamma$ is sufficiently jumbled. This will be done in Claim~\ref{lem:qxqy}. 

So far we have discussed our approach for establishing the first part of Lemma~\ref{lem:main}, i.e.,~\raf{eq:wishfor1}. For the second part of this lemma, i.e.,~\raf{eq:wishfor2}, we are to estimate of $|\cO(\alpha+2\eps,H,\Gamma)|$. 
This will be done by employing similar arguments to those above that will be applied to a rearrangement of the partition 
$(U_3,U_2,U_1,W,V_1,V_2,V_3)$. In particular we shall use the rearrangement
 \[(\tilde U_3,\tilde U_2,\tilde U_1,\tilde W,\tilde V_1,\tilde V_2,\tilde V_{3})=(W,U_1,U_2,U_3,V_3,V_2,V_1).\]
This is a valid partition of the $C_7(m)$-graph $H$. 

The interest here is to estimate the number of $(\alpha+2\eps)$-saturated cycles. The $(\alpha+2\eps)$-saturated edges that these cycles contain now lie between the sets
$\tilde W=U_3$ and $\tilde V_1=V_3$. For a given vertex $\tilde w\in \tilde W$ we set, as before, $\tilde X_{\tilde w}=N_H(\tilde w)\cap \tilde U_1$. 
Unlike before, we shall define the set  $\tilde Y_{\tilde w}\subset \tilde V_1$ to consist of those vertices of $\tilde V_1$ that are incident to $\tilde w$ through $(\alpha+2\eps)$-saturated edges.
 
The same arguments as above, yield bounds corresponding to \eqref{eq:example-main} and \eqref{eq:example-error} that will then lead to an upper bound on the number of $(\alpha+2\eps)$-saturated cycles containing $\tilde w$. 
In particular, we shall have that since every $(\alpha+2\eps)$-saturated edge is contained in at least $p ((\alpha+2\eps)pm)^{5}$ cycles containing $\tilde w$, then
\begin{equation}\label{ex:saturated}
|\tilde Y_{\tilde w}| p ((\alpha+2\eps)pm)^{5}\leq |\tilde X_w||\tilde Y_w|p((\alpha+\eps)pm)^{4}+o(p(pm)^{6}).
\end{equation}
Now, as $|\tilde X_w|\leq(\alpha+\eps)pm$, we conclude that this last inequality can hold provided that $|\tilde Y_{\tilde w}|=o(pm)$, implying that the number of $(\alpha+2\eps)$-saturated cycles containing $\tilde w$ is  bounded from above by the right hand side of \eqref{ex:saturated}, which is $o(p(pm)^6)$. Summing over all~$\tilde w\in \tilde W$ yields the desired bound.

\subsection{Preparation for the proof of Lemma~\ref{lem:main}} 
In what follows we make the above discussion precise and make it fit for a general $k$. 
As already mentioned, here $\Gamma$ is a $(p,\beta)$-jumbled graph and $H$ is a subgraph of $\Gamma$ that is 
$(\alpha,\eps,p)$-degree-regular $C_{2k+1}(m)$-graph. The latter we assume to have the partition $(U_k,\dots,U_1,W,V_1,\dots,V_{k})$ of its vertex set.

\subsubsection*{Partitioning the neighborhoods} 
For a real $\eta >0$, set $L_{\eta} = \lceil \log_{1+\eta} 2pm \rceil+1$,
and let
$$
\cI_{\eta} = \{0\} \times [L_{\eta}]^{k-1},
$$
where the use of zero here will be made clear shortly. 
By $\vek{s}$ we mean a tuple of integers $(s_1,\dots,s_k) \in \cI_{\eta}$ (so that $s_1$ is always zero), and
write $\vek s_j$ to denote the prefix $(s_1,\ldots,s_j)$, where $j \in [k]$, and write $\vek s$ instead of $\vek s_{k}$.
For a set $X\subset U$ we put $Z_{\eta}(\vek s_1,X) = Z_{\eta}(0,X) =X\subset U_1$, and for $j=2,\dots,k$ we define 
\begin{equation}\label{eq:defXY}
Z_{\eta}(\vek s_{j},X) =  \{x\in U_j \colon (1+\eta)^{s_{j}-1}\leq |N_H(x)\cap Z_{\eta}(\vek{s}_{j-1},X)| <(1+\eta)^{s_j}\},
\end{equation}
so that $Z_{\eta}(\vek s_j,X) \subseteq U_j$ for each $j \in [k]$.
For future reference, it will be convenient for us to stress that for $\eta\in (0,1]$, the value  $(1+\eta)^{s_j}$ is essentially bounded by the maximum degree of $H$ for any $j \in [k]$, in particular, it holds that 
\begin{equation}\label{eq:def-deg}
\mbox{if $\eta \in (0,1]$, then $(1+\eta)^{s_j} \leq 8pm$, for any $s_j \in [L_{\eta}]$;}  
\end{equation}
indeed, 
$$
L_{\eta} \leq \log_{1+\eta}(2pm) +2 \leq \log_{1+\eta} (2pm) + \log_{1+\eta} 4 = \log_{1+\eta}8pm.
$$ 
Observe, in addition, that $(1+\eta)^{L_{\eta}-1} \geq 2pm \geq (\alpha+\eps)pm$, and that the maximum degree of a vertex in $H$ is $(\alpha+\eps)pm$. This means that \raf{eq:defXY} defines a partition of the neighborhood of $Z_{\eta}(\vek s_{j-1},X)$ in $U_j$, i.e.,
\begin{equation}\label{eq:set-partition}
\dot\bigcup_{1\leq i \leq L_{\eta}} Z_{\eta}((s_1,\ldots,s_{j-1},i),X) = N_H(Z_{\eta}(\vek s_{j-1},X)) \cap U_j,
\end{equation}
where some of these sets may possibly be empty.

\subsubsection*{Counting paths} 
We exploit the above partitioning scheme in order to count $(X,U_k)$-paths in $H$. Throughout,  by {\sl paths} we always mean {\sl shortest paths}. Also, if $L$ and $R$ are two subsets of vertices, we write $(L,R)$-path to denote a path with one end in $L$ and the other in $R$. With these conventions, an $(X,U_k)$-path in $H$, where $X\subset U_1$, has a single vertex in each set $U_i$, $i\in[k]$. Finally, instead of $(X,\{y\})$-{\sl path} we write $(X,y)$-{\sl path}.

For $j\in \{2,\ldots,k\}$ and a tuple $\vek s \in \cI_{\eta}$, we write $\sum \vek s_j$ to denote the sum 
$\sum_{i=1}^{j} s_i = \sum_{i=2}^j s_i$, and we write $\sum \vek s$ instead of $\sum \vek s_{k}$. 
Further,  the subgraph of $H$ induced by the vertex sets $\{Z_{\eta}(\vek s_1,X), Z_{\eta}(\vek s_2,X) \ldots, Z_{\eta}(\vek s_{j},X)\}$ is denoted by $H(\vek s_j)$ (we suppress the dependence on~$\eta$ here).

For a vertex $z\in Z_{\eta}(\vek s_{j},X)$ the number $\pi_H(X,\vek s_j,z)$ of $(X,z)$-paths confined to  $H(\vek s_j)$ clearly satisfies
$$
\prod_{i=2}^{j}(1+\eta)^{s_i-1} \leq \pi_H(X,\vek s_j,z) \leq \prod_{i=2}^{j}(1+\eta)^{s_i}.
$$
Since  $s_1 = 0$, we may write 
\begin{equation}\label{eq:pathsintube}
(1+\eta)^{-(j-1)}(1+\eta)^{\sum \vek s_j } \leq \pi_H(X,\vek s_j,z) \leq (1+\eta)^{\sum \vek s_j }.
\end{equation}
Recall the benefit of the above partitioning scheme, we observe that for any two vertices $z,z' \in Z_{\eta}(\vek s_j,X)$, the variation between $\pi_H(X,\vek s_j,z)$ and $\pi_H(X,\vek s_j,z')$ is bounded by a factor of $(1+\eta)^{j-1}$. 
Hence, the number 
\[
\pi_H(X ,\vek s_j)=\sum_{z\in Z_{\eta}(\vek s_{j},X)}\pi_H(X,\vek s_j,z)
\]
of $(X ,Z_{\eta}(\vek s_j,X))$-paths confined to $H(\vek s_j)$ satisfies
\[(1+\eta)^{-(j-1)}|Z_{\eta}(\vek s_j,X)|(1+\eta)^{\sum \vek s_j } \leq \pi_H(X,\vek s_j) \leq |Z_{\eta}(\vek s_j,X)|(1+\eta)^{\sum \vek s_j }.\]

By \eqref{eq:set-partition}, every $(X,U_j)$-path is contained in $H(\vek s_j)$ for exactly one $\vek s_j$. Hence, summing over all $\vek s_j$, we obtain the following inequality for the number $\pi_H(X,U_j)$ of $(X,U_j)$-paths:
\begin{equation}\label{eq:boundpx}
(1+\eta)^{-(j-1)}\sum_{\vek s_j } |Z_{\eta}(\vek s_j,X)|(1+\eta)^{\sum \vek s_j}  
\leq \pi_H(U_j,X) \leq \sum_{\vek s_j }|Z_{\eta}(\vek s_j,X)|(1+\eta)^{\sum \vek s_j}\,.
\end{equation}
On the other hand, owing to the degree-regularity of $H$, we obviously have
\begin{equation}\label{eq:path-limits}
|X| \left((\alpha-\eps)pm \right)^{j-1} \leq \pi_H(U_j,X) \leq |X| \left((\alpha+\eps)pm \right)^{j-1}
\end{equation}
for all $j\in[k]$.

We conclude this section by mentioning that for a set $Y \subseteq V_1$ and a tuple $\vek t \in \cI_{\eta}$, we define the sets $\{Z_{\eta}(\vek t_j,Y)\}_{j=1}^{k}$ 
and the numbers $\pi_H(Y,\vek t_j,u)$, $\pi_H(Y,\vek t_j)$,  $\pi_H(V_j,Y)$ in an analogous manner to the sets and numbers just defined. 
The properties \raf{eq:set-partition} to \raf{eq:path-limits} translate verbatim.

\subsubsection*{Counting cycles} 
Given $u\in U_k$, $v\in V_k$, $\pi_H(X,u)$, and  $\pi_H(Y,v)$ (the number of $(X,u)$-paths and $(Y,v)$-paths, respectively), we have that 
\begin{equation}
\cS(X,Y) =\sum_{uv\in E_{\Gamma}(U_k,V_k)} \pi_H(X,u)\pi_H(Y,v)\label{eq:SXY-con}
\end{equation}
is the number of composed paths each of which comprises a $(X,u)$-path and a $(Y,v)$-path (in $H$) connected 
by the edge $uv \in \Gamma[U_k,V_k]$.
 Let $\cC(H,\Gamma,w)$ denote the set of cycles in~$\cC(H,\Gamma)$ containing the vertex $w\in W$, and observe that
\begin{equation}
|\cC(H,\Gamma,w)|  =  \cS(N_H(w) \cap U_1,N_H(w) \cap V_1) \label{eq:single-SXY}.
\end{equation}

Next, let us consider $|\cO(\mu,H,\Gamma)|$.  We
rearrange the partition $(U_k,\dots,U_1,W,V_1,\dots,V_{k})$ to yield a partition 
$(\tilde U_k,\dots,\tilde U_1,\tilde W,\tilde V_1,\dots,\tilde V_{k})$ obtained by renaming the partition classes 
as follows: 
\begin{equation}\label{eq:rename}
\tilde W = U_k, \; \tilde U_1 = U_{k-1},\ldots, \tilde U_{k-1} = U_1,\; \tilde U_k = W, \; \tilde V_1 = V_k,\ldots, \tilde V_k = V_1. 
\end{equation}
Note that  the new partition is still a valid partition of the $C_{2k+1}(m)$-graph $H$, and that the 
$\mu$-saturated edges now lie between $\tilde W$ and $\tilde V_1$. For a vertex $\tilde w\in \tilde W$, let
$D_{\mu}(\tilde w)$ denote the set of vertices in $\tilde V_1$ adjacent to  $\tilde w$ (in $\Gamma$) through a $\mu$-saturated edge and
let  $\cO(\mu,H,\Gamma,\tilde w)$ denote the set of $\mu$-saturated cycles containing $\tilde w$. Then,
\begin{equation}
|\cO(\mu,H,\Gamma,\tilde w)|  \leq  \tilde \cS(N_H(\tilde w) \cap \tilde U_1, D_{\mu}(\tilde w)), \label{eq:saturated-connection}
\end{equation}
where $\tilde \cS(\tilde X, \tilde Y)$ is defined  in the same way as $\cS(\tilde X, \tilde Y)$ only with respect to the partition
$(\tilde U_k,\dots,\tilde U_1,\tilde W,\tilde V_1,\dots,\tilde V_{k})$, and where $\tilde X\subset\tilde U_1$ and $\tilde Y\subset \tilde V_1$.
In \raf{eq:saturated-connection}, we obtain an upper bound only as cycles in $\tilde \cS(N_H(\tilde w) \cap \tilde U_1, D_{\mu}(\tilde w))$ may involve
edges in $\Gamma$ between $\tilde U_k$ and~$\tilde V_k$ which might not belong to $H$.

In view of \raf{eq:single-SXY} and \raf{eq:saturated-connection}, we focus on $\cS(X,Y)$ in order to estimate $|\cC(H,\Gamma,w)|$ and $|\cO(\mu,H,\Gamma,\tilde w)| $. 
For tuples $\vek s, \vek t \in \cI_{\eta}$, we write $e_{\Gamma}(\vek s,\vek t)$ for $e_{\Gamma}(Z_{\eta}(\vek s,X),Z_{\eta}(\vek t,Y))$, and observe that due to 
\raf{eq:boundpx}  we may write
\begin{equation}\label{eq:SXY}
\frac{ \sum_{\vek{s}}\sum_{\vek t} e_{\Gamma}(\vek s,\vek t)(1+\eta)^{\sum \vek s+\sum \vek t}}{(1+\eta)^{2(k-1)}} \leq \cS(X,Y) \leq
\sum_{\vek{s}}\sum_{\vek t} e_{\Gamma}(\vek s,\vek t)(1+\eta)^{\sum \vek s+\sum \vek t}.
\end{equation}
Here, we appeal to the jumbledness of $\Gamma$ to estimate $e_{\Gamma}(\vek s,\vek t)$ which asserts that 
\begin{equation}\label{eq:est}
e_{\Gamma}(\vek s,\vek t)=p|Z_{\eta}(\vek s,X)||Z_{\eta}(\vek t,Y)|\pm \beta \sqrt{|Z_{\eta}(\vek s,X)||Z_{\eta}(\vek t,Y)|}.
\end{equation}
Substituting this estimate for $e_{\Gamma}(\vek s,\vek t)$  in \raf{eq:SXY}, we arrive at the following two bounds for~$\cS(X,Y)$.
\begin{align}
\label{eq:lowerboundcS}  
(1+\eta)^{2(k-1)}\cS(X,Y) & \geq p P_{\eta}(X) P_{\eta}(Y) - \beta Q_{\eta}(X)Q_{\eta}(Y), \\
\intertext{and}
\label{eq:upperboundcS}
\cS(X,Y) & \leq pP_{\eta}(X)P_{\eta}(Y)+\beta Q_{\eta}(X)Q_{\eta}(Y),
\end{align}
where
\begin{align}\label{eq:PundQ}
P_{\eta}(X)=\sum_{\vek s}|Z_{\eta}(\vek s,X)|(1+\eta)^{\sum \vek s}
\text{ and } 
Q_{\eta}(X)=\sum_{\vek s}\sqrt{|Z_{\eta}(\vek s,X)|}(1+\eta)^{\sum \vek s}.
\end{align}

Hence, for a small $\eta$, the size of $\cS(X,Y)$ is essentially determined up to the additive error term $\beta Q_{\eta}(X)Q_{\eta}(Y)$. In the sequel, we show that this is dominated by the main term $pP_{\eta}(X)P_{\eta}(Y)$. 
The estimate of the error term is complicated and consequently delegated to Claim~\ref{lem:qxqy} below. The estimate of the main term, however, is almost trivial at this point. 
To see the latter, we rewrite \raf{eq:boundpx} for $j=k$ as to obtain   
$$
P_{\eta}(X) \leq \pi_H(U_k,X) (1+\eta)^{k-1} \leq P_{\eta}(X) (1+\eta)^{k-1}.
$$
With \raf{eq:path-limits} this yields 
\begin{align}\label{eq:boundsPX}
|X|\left((\alpha-\eps)pm \right)^{k-1} \leq P_{\eta}(X) \leq |X|\left((\alpha+\eps)(1+\eta)pm \right)^{k-1}. 
 \end{align}
A similar assertion clearly holds for $P_{\eta}(Y)$.

We may now summarize all of the above discussion concisely as follows. 
For $ w \in W$, the sets $X_w = N_H(w)\cap U_1$ and $Y_w = N_H(w)\cap V_1$ both have size $(\alpha \pm \eps)pm$ due to the degree regularity of $H$. Owing to 
\eqref{eq:single-SXY},  \raf{eq:lowerboundcS}, and \raf{eq:boundsPX} we have that
\begin{equation}\label{eq:single-C}
(1+\eta)^{2k}|\cC(H,\Gamma,w)|  \geq \left(\alpha-\eps \right)^{2k}p(pm)^{2k} -\beta Q_{\eta}(X_w)Q_{\eta}(Y_w).
\end{equation}

Next, let $\tilde w \in \tilde W$ and $X_{\tilde w} =N_H(\tilde w) \cap \tilde U_1$ and 
$Y_{\tilde w} = D_{\mu}(\tilde w)$. Moreover, let $\tilde P_{\eta}(\tilde X)$ and~$\tilde Q_{\eta}(\tilde X)$ be 
defined analogously to  $P_{\eta}(X)$ and $Q_{\eta}(X)$ as in \eqref{eq:PundQ}.

We have, owing to \raf{eq:saturated-connection},  \raf{eq:upperboundcS}, and \raf{eq:boundsPX}, that
\begin{align}
|\cO(\mu,H,\Gamma,\tilde w)| & \overset{\raf{eq:saturated-connection}, \raf{eq:upperboundcS}}{\leq} p \tilde P_{\eta}(X_{\tilde w}) \tilde P_{\eta}(D_{\mu}(\tilde w)) + \beta  \tilde Q_{\eta}(X_{\tilde w}) \tilde Q_{\eta}(D_{\mu}(\tilde w)) \nonumber \\
\; & \overset{\phantom{\raf{eq:saturated-connection}}}{\leq} |X_{\tilde w}||D_{\mu}(\tilde w)| p \left((\alpha+\eps)pm \right)^{2(k-1)}+ \beta  \tilde Q_{\eta}(X_{\tilde w}) \tilde Q_{\eta}(D_{\mu}(\tilde w)) \nonumber \\
\label{eq:single-O}
\; & \overset{\phantom{\raf{eq:saturated-connection}}}{\leq}  |D_{\mu}(\tilde w)| p \left( (\alpha+\eps)pm \right)^{2k-1} +\beta \tilde Q_{\eta}(X_{\tilde w}) \tilde Q_{\eta}(D_{\mu}(\tilde w)).
\end{align}

We conclude this section by stating the claim that will be used to control the error term $\beta Q_{\eta}(X)Q_{\eta}(Y)$ (and $\beta \tilde Q_{\eta}(\tilde X)\tilde Q_{\eta}(\tilde Y)$) discussed above.    

\begin{claim}
\label{lem:qxqy}
For any integer $k\geq 1$, and reals $0< \xi, \alpha,\eta,\nu \leq 1$, and $0< \eps \leq \alpha/3$, there exists a $\gamma>0$  such that for every sequence of densities $p=p(n)>0$ there exists an $n_0$ such that for all $n>n_0$ the following holds.
 
Let $\Gamma$ be an $n$-vertex $(p,\beta)$-jumbled graph with 
\begin{equation}\label{eq:cl8beta}
\beta < \gamma  p^{1+\frac1{2k-1}}n\log^{-2(k-1)}n\,,
\end{equation}
and let $H\subseteq \Gamma$ be an $(\alpha,p,\eps)$-degree-regular $C_{2k+1}(m)$-graph, 
$m \geq \nu n$, with the partition $(U_k,\dots,U_1,W,V_1,\dots,V_k)$. 
If $X\subset U_1$ and $Y\subset V_1$ both have size at most $(\alpha+\eps)pm$, then
$$
\beta Q_{\eta}(X)Q_{\eta}(Y)<\xi p(pm)^{2k}.
$$
\end{claim}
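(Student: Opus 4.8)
Throughout write $\l=2k+1$. Since $X\subseteq U_1$ and $Y\subseteq V_1$ play completely symmetric roles (see the paragraph following \raf{eq:path-limits}), the plan is to prove a single bound on $Q_\eta(X)$ valid for \emph{any} $X\subseteq U_1$ with $|X|\leq(\alpha+\eps)pm$; the identical bound then holds for $Q_\eta(Y)$, and $\beta Q_\eta(X)Q_\eta(Y)$ will be controlled through $(a+b)^2\leq 2a^2+2b^2$. We may assume $(\alpha+\eps)pm\geq1$, since otherwise $|X|\leq(\alpha+\eps)pm<1$ forces $X=\varnothing$ and $Q_\eta(X)=0$; we may also take $n\geq n_0$ with $n_0$ large. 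For $1\leq j\leq k$ I would introduce the partial sums $Q_j:=\sum_{\vek s_j}\sqrt{|Z_\eta(\vek s_j,X)|}\,(1+\eta)^{\sum\vek s_j}$ and $P_j:=\sum_{\vek s_j}|Z_\eta(\vek s_j,X)|\,(1+\eta)^{\sum\vek s_j}$, both over $\vek s_j\in\{0\}\times[L_\eta]^{j-1}$, so that $Q_1=\sqrt{|X|}$, $P_1=|X|$, and $Q_k=Q_\eta(X)$ as in \raf{eq:PundQ}. Combining the left inequality of \raf{eq:boundpx} with \raf{eq:path-limits} gives $P_j\leq(1+\eta)^{j-1}|X|\bigl((\alpha+\eps)pm\bigr)^{j-1}\leq (3pm)^{j}$, where I use $|X|\leq(\alpha+\eps)pm$ and $(\alpha+\eps)(1+\eta)\leq \tfrac83<3$ (as $\alpha\leq1$, $\eps\leq\alpha/3$, $\eta\leq1$).

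The heart of the argument is the one-step recursion
\[
Q_j\ \leq\ 4\beta L_\eta\, Q_{j-1}\ +\ \frac{8p\sqrt m}{\eta}\,P_{j-1}\qquad(2\leq j\leq k).
\]
To establish it I would fix $\vek s_{j-1}$, put $z:=|Z_\eta(\vek s_{j-1},X)|$ (if $z=0$ the inner sum vanishes by \raf{eq:set-partition}, so assume $z\geq1$), and split the sum over $s_j\in[L_\eta]$ according to whether $(1+\eta)^{s_j}\leq 4pz$ (\emph{small}) or not (\emph{large}). For small $s_j$ I use only the trivial $|Z_\eta((\vek s_{j-1},s_j),X)|\leq|U_j|=m$, together with the geometric estimate $\sum_{(1+\eta)^{s}\leq 4pz}(1+\eta)^{s}\leq \tfrac{8pz}{\eta}$, so the small part of the inner sum is at most $\tfrac{8p\sqrt m}{\eta}z$. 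For large $s_j$ one has $(1+\eta)^{s_j-1}>2pz$, and by \raf{eq:defXY} every vertex of $Z_\eta((\vek s_{j-1},s_j),X)\subseteq U_j$ has at least $(1+\eta)^{s_j-1}$ neighbours in $H$ — hence in $\Gamma$ — inside the \emph{disjoint} set $Z_\eta(\vek s_{j-1},X)\subseteq U_{j-1}$; Fact~\ref{lem:bij_size} applied with threshold $(1+\eta)^{s_j-1}$ therefore yields $|Z_\eta((\vek s_{j-1},s_j),X)|\leq 4\beta^2 z/(1+\eta)^{2(s_j-1)}$ (using $(1+\eta)^{s_j-1}-pz>\tfrac12(1+\eta)^{s_j-1}$), so $\sqrt{|Z_\eta((\vek s_{j-1},s_j),X)|}\,(1+\eta)^{s_j}\leq 4\beta\sqrt z$, and summing over the at most $L_\eta$ values of $s_j$ bounds the large part by $4\beta L_\eta\sqrt z$. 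Multiplying by $(1+\eta)^{\sum\vek s_{j-1}}$ and summing over $\vek s_{j-1}$ gives the recursion. I expect this to be the main obstacle: one must choose the small/large cut-off so that Fact~\ref{lem:bij_size} becomes applicable to the large thresholds, while the small thresholds — where $Z_\eta((\vek s_{j-1},s_j),X)$ may genuinely fill all of $U_j$ — contribute only the harmless $P_{j-1}$-term.

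Unrolling the recursion from $Q_k$ down to $Q_1$ and using $P_{j-1}\leq(3pm)^{j-1}$ gives
\[
Q_\eta(X)\ \leq\ (4\beta L_\eta)^{k-1}\sqrt{(\alpha+\eps)pm}\ +\ \frac{8p\sqrt m}{\eta}\sum_{j=2}^{k}(4\beta L_\eta)^{k-j}(3pm)^{j-1}.
\]
Hypothesis \raf{eq:cl8beta}, together with $m\geq\nu n$ and the bound $L_\eta\leq 6\log n/\eta$ (which holds for $n$ large, since $(\alpha+\eps)pm\geq1$ gives $2pm\geq1$), shows $8\beta L_\eta\leq 3pm$ once $\gamma$ is small; hence the geometric-type sum is dominated by its $j=k$ term $(3pm)^{k-1}$ up to a factor $2$, and $Q_\eta(X)\leq B_1+B_2$ with $B_1:=(4\beta L_\eta)^{k-1}\sqrt{(\alpha+\eps)pm}$ and $B_2:=16p\sqrt m(3pm)^{k-1}/\eta$, and likewise $Q_\eta(Y)\leq B_1+B_2$. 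Then $\beta Q_\eta(X)Q_\eta(Y)\leq 2\beta B_1^2+2\beta B_2^2$. For the first term the crucial identity $\bigl(1+\tfrac1{2k-1}\bigr)(2k-1)=2k$ turns \raf{eq:cl8beta} into $\beta^{2k-1}\leq\gamma^{2k-1}p^{2k}n^{2k-1}(\log n)^{-2(k-1)(2k-1)}$, and multiplying by $(4L_\eta)^{2(k-1)}\leq(24\log n/\eta)^{2(k-1)}$ leaves the surplus factor $(\log n)^{-2(k-1)(2k-2)}\leq1$; using $n^{2k-1}\leq\nu^{-(2k-1)}m^{2k-1}$ this gives $2\beta B_1^2\leq C_1(k)\,\gamma^{2k-1}\eta^{-2(k-1)}\nu^{-(2k-1)}\,p(pm)^{2k}$. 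For the second term the crude bound $\beta\leq\gamma pn$ gives $2\beta B_2^2\leq C_2(k)\,\gamma\eta^{-2}\nu^{-1}\,p(pm)^{2k}$. Choosing $\gamma$ small enough, depending only on $k$, $\xi$, $\nu$, $\eta$ (and absorbing the $8\beta L_\eta\leq 3pm$ requirement), makes the sum of the two bounds less than $\xi p(pm)^{2k}$, which is exactly the asserted inequality.
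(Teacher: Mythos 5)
Your proof is correct, but it follows a genuinely different route from the paper's. The paper bounds each individual summand $q_{\eta}(\vek s,X)=\sqrt{|Z_{\eta}(\vek s,X)|}(1+\eta)^{\sum \vek s}$ uniformly by $2^{4k}p^{k-\frac{1}{2(2k-1)}}m^{\frac{2k-1}{2}}$ (see \raf{eq:small-q-bound}), via a case distinction at the threshold $|Z_{\eta}(\vek s_j,X)|\gtrless p^{1/(2k-1)}m$: below the threshold it iterates Fact~\ref{lem:bij_size} along the tuple, above it it uses a telescoping factorization $q_{\eta}=R_1\times R_2$ together with the path counts \raf{eq:boundpx} and \raf{eq:path-limits}; it then multiplies by the number $\approx L_{\eta}^{2(k-1)}$ of pairs $(\vek s,\vek t)$, and the polylog factor in \raf{eq:cl8beta} is consumed exactly to cancel this count. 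You instead bound the whole sum $Q_{\eta}(X)$ by a level-by-level recursion $Q_j\leq 4\beta L_{\eta}Q_{j-1}+\tfrac{8p\sqrt m}{\eta}P_{j-1}$, splitting each layer dyadically at $(1+\eta)^{s_j}\lessgtr 4p|Z_{\eta}(\vek s_{j-1},X)|$, with Fact~\ref{lem:bij_size} handling the large classes and the trivial bound $|Z_{\eta}(\cdot)|\leq m$ plus a geometric series handling the small ones; unrolling gives $Q_{\eta}(X)\leq B_1+B_2$ and the same exponent bookkeeping $(1+\tfrac1{2k-1})(2k-1)=2k$ closes the argument, with the log factor in \raf{eq:cl8beta} absorbing $L_{\eta}^{2(k-1)}$ in the $B_1^2$ term (you even have slack of $\log^{-2(k-1)(2k-2)}n$ where the paper uses the log factor exactly). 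Your approach buys a self-contained, slack-free-of-case-analysis estimate of $Q_{\eta}$ itself at the cost of constants depending on $\eta^{-1}$ (harmless, since $\gamma$ may depend on $\eta$); the paper's per-term bound avoids any recursion but needs the two-case analysis and the telescoping trick. One small presentational slip: your parenthetical claim that $8\beta L_{\eta}\leq 3pm$ holds once $\gamma$ is small is true for $k\geq 2$ (there $\log^{-2(k-1)}n\cdot\log n\leq 1$, and $\gamma\leq\eta\nu/16$ suffices), but as literally stated it can fail for $k=1$ (e.g.\ $p$ constant, $\beta\approx\gamma p^2 n$); this is immaterial because for $k=1$ the geometric sum you are controlling is empty and $Q_{\eta}(X)=\sqrt{|X|}\leq B_1$, but you should say so explicitly.
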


We postpone the proof of Claim~\ref{lem:qxqy} until Section~\ref{sec:qxqy}. In the subsequent section we show how to derive Lemma~\ref{lem:main} from Claim~\ref{lem:qxqy}.

\subsection{Proof of Lemma~\ref{lem:main}}
Given $k,\nu,\alpha_0$, and $\eps$, put
\begin{equation}\label{eq:choice-lem-main}
 \xi = \left(\eps/4\right)^{4k}\,,\qquad\eta = \min\left\{\frac{\eps}{4 \alpha_0},1\right\}, 
\end{equation}
and let $\gamma'$ be that obtained from Claim~\ref{lem:qxqy} applied with $k,\; \xi,\; \alpha = \alpha_0,\; \eta, \;\nu$, and $\eps$. For Lemma~\ref{lem:main} we set 
$$
\gamma = \gamma',
$$ 
and choose $n_0$ to be sufficiently large as to accommodate  Claim~\ref{lem:qxqy}.
Finally, let $\Gamma$ and $H$ be as specified in Lemma~\ref{lem:main}.

Owing to \raf{eq:single-C}
\begin{align*}
|\cC(H,\Gamma,w)| & \geq   \left(\frac{\alpha-\eps}{1+\eta} \right)^{2k}p(pm)^{2k} - \frac{1}{(1+\eta)^{2k}}\beta Q_{\eta}(X_w)Q_{\eta}(Y_w),\\
\intertext{for any $w \in W$, where $X_w = N_H(w)\cap U_1$ and $Y_w = N_H(w)\cap V_1$. 
Since $H$ is $(\alpha,p,\eps)$-degree-regular both $X_w$ and $Y_w$ have size at most $(\alpha+\eps)pm$. As a result, 
\[
	\beta Q_{\eta}(X_w)Q_{\eta}(Y_w) \leq \xi p(pm)^{2k}\,,
\] 
by Claim~\ref{lem:qxqy} applied with $X =X_w$ and $Y=Y_w$. Then, owing to our choices for $\eta$ and $\xi$ in~\raf{eq:choice-lem-main} we have that}
|\cC(H,\Gamma,w)| & \geq \left(\frac{\alpha-\eps}{1+\eta} \right)^{2k}p(pm)^{2k} - (1+\eta)^{-2k}\xi p(pm)^{2k}\\
\; & \geq \frac{(\alpha-\eps)^{2k} - (\eps/4)^{4k}}{(1+\eta)^{2k}}p(pm)^{2k} \\
\; & \geq \left( \frac{\alpha-(3/2)\eps}{1+\eta} \right)^{2k} p(pm)^{2k}\\
\; & \geq (\alpha-2\eps)^{2k}p(pm)^{2k}
\end{align*} 
holds for any $w \in W$. Summing over all vertices in $W$ yields~\raf{eq:wishfor1}, the first assertion of Lemma~\ref{lem:main}.  

It remains to show \raf{eq:wishfor2}, the second assertion of Lemma~\ref{lem:main}. Here, we use the partition of~\raf{eq:rename}. It is sufficient to prove that for any $\tilde w \in \tilde W$ it holds that 
\begin{equation}\label{eq:goal-D}
|D_{\alpha+2\eps}(\tilde w)|\leq \eps^{2k}pm.
\end{equation}
Indeed, assuming \raf{eq:goal-D} yields 
\begin{align*}
|\cO(\alpha+2\eps,H,\Gamma,\tilde w)| & \overset{\raf{eq:single-O}}{\leq} |D_{\alpha+2\eps}(\tilde w)| p \left( (\alpha+\eps)pm \right)^{2k-1} +\beta \tilde Q_{\eta}(X_{\tilde w}) \tilde Q_{\eta}(D_{\alpha+2\eps}(\tilde w))\\
\; & \overset{\phantom{\raf{eq:single-O}}}{\leq} \eps^{2k}(\alpha+\eps)^{2k-1}p (pm)^{2k} +\xi p (pm)^{2k} \; \; \mbox{(by Claim~\ref{lem:qxqy})}\\
\; & \overset{ \raf{eq:choice-lem-main}}{\leq} \left( \eps^{2k}(\alpha+\eps)^{2k-1} + (\eps/4)^{4k} \right) p (pm)^{2k}\\
\; & \overset{\phantom{\raf{eq:single-O}}}{\leq} (3\eps)^{2k}p(pm)^{2k}.
\end{align*}
With this \raf{eq:wishfor2} follows once we sum over all vertices in $\tilde W$. 

It remains to prove \raf{eq:goal-D}. Suppose $|D_{\alpha+2\eps}(\tilde z)| >\eps^{2k}pm$ 
for some vertex $\tilde z \in \tilde W$, and choose 
$B \subseteq D_{\alpha+2\eps}(\tilde z) \subseteq \tilde V_1$ of size $\lceil \eps^{2k} pm \rceil\leq (\alpha+\eps)pm$. 
Let us now count the number of members of $\cO(\alpha+2\eps,H,\Gamma,\tilde z)$ with the $(\alpha+2\eps)$-saturated edge of the form $\tilde z b$ where $b \in B$. We write $\cO(B,\tilde z)$ to denote this number. By the definition of an $(\alpha+2\eps)$-saturated edge, we attain 
\begin{align*}
|\cO(B,\tilde z)| & \geq |B| p ((\alpha+2\eps)pm)^{2k-1}= \eps^{2k}(\alpha+2\eps)^{2k-1}p(pm)^{2k}.
\end{align*}
On the other hand, \raf{eq:single-O} with $X_{\tilde z} = N_H(\tilde z) \cap \tilde U_1$ and $B$ instead of $D_{\mu}(\tilde z)$, 
together with Claim~\ref{lem:qxqy} with $X=X_{\tilde z}$ and $Y = B$ yield
\begin{align*}
|\cO(B,\tilde z)| & \overset{\phantom{\raf{eq:choice-lem-main}}}{\leq} |B| p ((\alpha+\eps)pm)^{2k-1} + \xi p(pm)^{2k}\; \mbox{(by Claim~\ref{lem:qxqy} and \raf{eq:single-O})}\\
\; & \overset{\raf{eq:choice-lem-main}}{=} \left(\eps^{2k}(\alpha+\eps)^{2k-1} + (\eps/4)^{4k} \right)p (pm)^{2k}.
\end{align*}
The contradiction here is that 
$$
\eps^{2k}(\alpha+\eps)^{2k-1} + (\eps/4)^{4k}  < \eps^{2k}(\alpha+2\eps)^{2k-1}.
$$
To see this, observe that
$$
(\alpha+\eps)^{2k-1}+\frac{\eps^{2k}}{16^{2k}}  \leq (\alpha+\eps)^{2k-1}+(\eps/16)^{2k-1}\leq
\left(\alpha+\eps + \eps/16\right)^{2k-1} < (\alpha+2\eps)^{2k-1}.
$$

This proves \raf{eq:goal-D} and thus completes our proof of Lemma~\ref{lem:main}.\qed

\subsection{Proof of Claim~\ref{lem:qxqy}}\label{sec:qxqy} 
Given $k,\; \xi,\; \alpha,\; \eps,\; \eta$, and $\nu$, we set
\begin{equation}\label{eq:gamma-choice-qxqy}
\gamma = \frac{\xi \left(\log(1+\eta)\right)^{2k}}{2^{8k} \nu}\,,
\end{equation}
choose $n_0$ to be sufficiently large, and let $\Gamma$ be a $(p,\beta)$-jumbled graph, where $\beta$ 
satisfies~\eqref{eq:cl8beta}.

Recall that we seek to show that $\beta Q_{\eta}(X) Q_{\eta}(Y) \leq \xi p(pm)^{2k}$, 
where 
\[Q_{\eta}(X) = \sum_{\vek s \in \cI_{\eta}} \sqrt{|Z_{\eta}(\vek s,X)|}(1+\eta)^{\sum \vek s}\,,\] 
and  $Q_{\eta}(Y)$ defined in a similar manner. To this end, we shall now consider the term 
\begin{equation}\label{eq:express}
q_{\eta}(\vek s,X) = \sqrt{|Z_{\eta}(\vek s,X)|}(1+\eta)^{\sum \vek s},
\end{equation}
where $\vek s \in \cI_{\eta}$. Below we shall prove that for any $\vek s \in \cI_{\eta}$
\begin{equation}\label{eq:small-q-bound}
\mbox{$q_{\eta}(\vek s,X)\leq 2^{4k}p^{k-\frac{1}{2(2k-1)}} m^{\frac{2k-1}{2}}$.}
\end{equation}
This estimate will hold for the counterpart term $q_{\eta}(\vek t,Y)$ with $X$ replaced by $Y$ and $\vek t \in \cI_{\eta}$ as well due to symmetry.

Assuming \raf{eq:small-q-bound}, we prove that $\beta Q_{\eta}(X) Q_{\eta}(Y) \leq \xi p(pm)^{2k}$ as follows. We observe the identity 
\begin{equation}\label{eq:error-small-q}
\beta Q_{\eta}(X) Q_{\eta}(Y) = \beta \sum_{\vek s,\vek t}q_{\eta}(\vek s,X)q_{\eta}(\vek t,Y),
\end{equation}
and put 
$$
L = (2 \log_{1+\eta} n )^{2(k-1)} \geq  L_{\eta}^{2(k-1)},
$$ 
which is an upper bound on the number of summands in \raf{eq:error-small-q} (for $n$ sufficiently large). Then,
\begin{align*}
\beta Q_{\eta}(X)Q_{\eta}(Y) & \overset{\phantom{\raf{eq:gamma-choice-qxqy}}}{\leq} \beta L \left(2^{4k}p^{k-\frac{1}{2(2k-1)}} m^{\frac{2k-1}{2}} \right)^2 \\
& \overset{\phantom{\raf{eq:gamma-choice-qxqy}}}{=} \beta L 2^{8k} p^{2k-\frac{1}{2k-1}}m^{2k-1} \nonumber \\
& \overset{\eqref{eq:cl8beta}}{\leq} \gamma  2^{8k} \left(\frac{1}{\log(1+\eta)}\right)^{2k-1}p^{2k+1} m^{2k-1} n \\
& \overset{\raf{eq:gamma-choice-qxqy}}{\leq} \xi p(pm)^{2k}. 
\end{align*}

It remains to prove \raf{eq:small-q-bound}. Fix now a tuple $\vek s \in \cI_{\eta}$. 
We shall consider two cases. Throughout these cases we shall use the estimates 
\begin{equation}\label{eq:atmost2}
\alpha+\eps \leq 2 \quad \text{and}\quad  1+\eta \leq 2.
\end{equation}
\begin{enumerate}[label=\alabel]
\item Suppose, firstly, that $|Z_{\eta}(\vek s_j,X)| < p^{1/(2k-1)}m$ for all $2\leq j \leq k$.  
 We shall show
\begin{equation}
\label{eq:1eta2}
q_{\eta}(\vek s,X)=\sqrt{|Z_{\eta}(\vek s,X)|}(1+\eta)^{\sum \vek s} \leq (1+\eta)^k\sqrt{|X|}\prod_{j=2}^{k} M_j,
\end{equation}
where
$$
M_j= 2\max\left\{\beta,p \sqrt{|Z_{\eta}(\vek s_j,X)||Z_{\eta}(\vek s_{j-1},X)|}\right\} < 2p^{1+1/(2k-1)}m.
$$

Then, \raf{eq:1eta2} together with the assumption that $\sqrt{|X|} \leq \sqrt{(\alpha+\eps)pm}$ give 
\begin{align}
q_{\eta}(\vek s,X) & \overset{\phantom{\raf{eq:atmost2}}}{\leq} (1+\eta)^k\sqrt{(\alpha+\eps)pm}\left(2p^{1+\frac{1}{2k-1}}m \right)^{k-1} \nonumber \\
& \overset{\raf{eq:atmost2}}{\leq} 2^{2k} \sqrt{pm}\left(p^{1+\frac{1}{2k-1}}m \right)^{k-1} \nonumber \\
& \overset{\phantom{\raf{eq:atmost2}}}{=} 2^{2k} p^{k-\frac{1}{2(2k-1)}} m^{\frac{2k-1}{2}}, \nonumber \end{align}
so that \raf{eq:small-q-bound} holds in this case.

To verify \raf{eq:1eta2}, we first show for all $2\leq j\leq k$ 
\begin{equation}\label{eq:repeat}
\sqrt{|Z_{\eta}(\vek s_{j},X)|}(1+\eta)^{s_{j}-1} \leq M_{j} \sqrt{|Z_{\eta}(\vek s_{j-1},X)|}.
\end{equation} 
Note that \raf{eq:repeat} holds if $(1+\eta)^{s_{j}-1} \leq 2 p |Z_{\eta}(\vek s_{j-1},X)|$. On the other hand, if
$(1+\eta)^{s_j} > 2 p |Z_{\eta}(\vek s_{j-1},X)|$ holds then 
$$
\sqrt{|Z_{\eta}(\vek s_{j},X)|} \overset{\raf{eq:bij_size}}{\leq} \frac{\beta \sqrt{|Z_{\eta}(\vek s_{j-1},X)|}} {(1+\eta)^{s_{j}-1} - p |Z_{\eta}(\vek s_{j-1},X)|} 
\leq \frac{\beta \sqrt{|Z_{\eta}(\vek s_{j-1},X)|}} {\frac{1}{2}(1+\eta)^{s_{j}-1}}.
$$
Repeating \raf{eq:repeat} for each $2\leq j\leq k$ yields \raf{eq:1eta2}. This leads to a $(1+\eta)^{k-1}$ multiplicative factor, here we take $(1+\eta)^{k}$.
This concludes the proof of \raf{eq:1eta2}.

\item Suppose, secondly, that $|Z_{\eta}(\vek s_j,X)| \geq p^{1/(2k-1)}m$ for some $2\leq j \leq k$. To prove \raf{eq:small-q-bound} in this case, we express 
$q_{\eta}(\vek s,X)$ as a product of two numbers, that is, we write   
 \begin{align}
q_{\eta}(\vek s,X) &= \sqrt{|Z_{\eta}(\vek s,X)|}(1+\eta)^{\sum \vek s} = R_1 \times R_2, \label{eq:R1R2}\\ 
\intertext{where $R_1$ is given by}
R_1 & =  \sqrt{|Z_{\eta}(\vek s_j,X)|}(1+\eta)^{\sum \vek s_j}, \label{eq:R1}\\
\intertext{and where $R_2$ is given by} R_2 & = \prod_{r=j}^{k-1} 
\sqrt{\frac{|Z_{\eta}(\vek s_{r+1},X)|}{|Z_{\eta}(\vek s_r,X)|}} (1+\eta)^{s_{r+1}}. \label{eq:R2}
\end{align}
Before proceeding let us, first, observe that $R_2$ is well-defined. Indeed, we are concerned with $q_{\eta}(\vek s,X)$ provided $Z_{\eta}(\vek s,X)$ is nonempty as otherwise $q_{\eta}(\vek s,X) = 0$ and does not contribute to the sum \raf{eq:error-small-q}. Now, by the definition of the $Z_{\eta}$-sets, the set $Z_{\eta}(\vek s,X)$ being nonempty implies that every set $Z_{\eta}(\vek s_r,X)$ is nonempty for each $r \in [k]$. Consequently, it is valid to divide by $|Z_{\eta}(\vek s_r,X)|$ for each $r \in [k]$. 
Second, let us also note that $R_1 \times R_2$ is a telescoping product;  the cardinalities of all $Z_{\eta}$-sets cancel each other with only $|Z_{\eta}(\vek s,X)|$ remaining after all cancelations.  

Now, to upper bound $q_{\eta}(\vek s,X)$ as required in this case, we prove that 
\begin{align} 
R_1 & \leq 4^{j}p^{j-\frac{1}{2(2k-1)}}m^{\frac{2j-1}{2}}, \label{eq:R1-bound}\\
\intertext{and that}
R_2 & \leq (6pm)^{k-j} \label{eq:R2-bound}. 
\end{align}
Owing to \raf{eq:R1R2}, these two estimates imply \raf{eq:small-q-bound} in this case.  
In what follows, we prove the estimates \raf{eq:R1-bound} and \raf{eq:R2-bound}. 

To see \raf{eq:R1-bound}, observe first that 
\begin{equation}\label{eq:R1.1}
|Z_{\eta}(\vek s_j,X)| (1+\eta)^{\sum \vek s_j} \leq |X|\left( (1+\eta)(\alpha+\eps)pm \right)^{j-1} \leq (4pm)^j,
\end{equation}
where the first inequality is due to the degree-regularity of $H$ (see, \eqref{eq:boundpx} and \raf{eq:path-limits}), and the second inequality is due to the assumption that $|X| \leq (\alpha+\eps)pm$.
This together with the assumption of this case that $|Z_{\eta}(\vek s_j,X)| \geq p^{1/(2k-1)}m$ yield that
\begin{equation}\label{eq:R1.2}
(1+\eta)^{\sum \vek s_j} \leq \frac{(4pm)^j}{|Z_{\eta}(\vek s_j,X)|} \leq 4^j p^{j-\frac{1}{2k-1}}m^{j-1}.
\end{equation}
Rewriting \raf{eq:R1}, we arrive at 
\begin{align*}
R_1 &= \sqrt{|Z_{\eta}(\vek s_j,X)| (1+\eta)^{\sum \vek s_j}}(1+\eta)^{\frac{1}{2}\sum \vek s_j}.
\intertext{Owing to \raf{eq:R1.1} and \raf{eq:R1.2}, we then have} 
R_1 & \leq (4pm)^{j/2} \left( 4^j p^{j-\frac{1}{2k-1}}m^{j-1}\right)^{1/2},
\end{align*}
and \raf{eq:R1-bound} follows. 

It remains to prove \raf{eq:R2-bound}. To see this, let us first rewrite \raf{eq:R2} as to attain the form
\begin{equation}\label{eq:R2.1}
R_2 = \prod_{r=j}^{k-1} \left( \frac{|Z_{\eta}(\vek s_{r+1},X)|}{|Z_{\eta}(\vek s_r,X)|}(1+\eta)^{s_{r+1}}\right)^{1/2}(1+\eta)^{\frac{s_{r+1}}{2}}. 
\end{equation}
Recall, first, that for any $r \in [k]$, it holds that $(1+\eta)^{s_r} \leq 8pm$, by \raf{eq:def-deg}. 
Second, for $r \in [k-1]$, observe that 
$$
|Z_{\eta}(\vek s_{r+1},X)|(1+\eta)^{s_{r+1}} \leq (1+\eta)e_H(Z_{\eta}(\vek s_r,X),Z_{\eta}(\vek s_{r+1},X)),
$$  
so that the term ${|Z_{\eta}(\vek s_{r+1},X)|}(1+\eta)^{s_{r+1}}/{|Z_{\eta}(\vek s_r,X)|}$ exceeds the average degree of a vertex in $Z_{\eta}(\vek s_r,X)$ in the graph $H[Z_{\eta}(\vek s_r,X), Z_{\eta}(\vek s_{r+1},X)]$ by a factor of at most $1+\eta$. Owing to the degree-regularity of $H$, this average degree is bounded by $2pm$. Consequently, 
$$
\frac{|Z_{\eta}(\vek s_{r+1},X)|}{|Z_{\eta}(\vek s_r,X)|}(1+\eta)^{s_{r+1}} \leq (1+\eta)2pm \leq 4 pm. 
$$   
It follows that a single factor in \raf{eq:R2.1} is at most 
$\sqrt{4pm} \sqrt{8pm} \leq 6pm$ and \raf{eq:R2-bound} follows. 
\end{enumerate}

This concludes our proof of Claim~\ref{lem:qxqy}. \qed

\begin{bibdiv}
\begin{biblist}

\bib{A86}{article}{
   author={Alon, N.},
   title={Eigenvalues and expanders},
   note={Theory of computing (Singer Island, Fla., 1984)},
   journal={Combinatorica},
   volume={6},
   date={1986},
   number={2},
   pages={83--96},
   issn={0209-9683},
   review={\MR{875835 (88e:05077)}},
   doi={10.1007/BF02579166},
}

\bib{A94}{article}{
   author={Alon, Noga},
   title={Explicit Ramsey graphs and orthonormal labelings},
   journal={Electron. J. Combin.},
   volume={1},
   date={1994},
   pages={Research Paper 12, approx.\ 8 pp.\ (electronic)},
   issn={1077-8926},
   review={\MR{1302331 (95k:05145)}},
}

\bib{AK}{article}{
   author={Alon, Noga},
   author={Kahale, Nabil},
   title={Approximating the independence number via the $\theta$-function},
   journal={Math. Programming},
   volume={80},
   date={1998},
   number={3, Ser. A},
   pages={253--264},
   issn={0025-5610},
   review={\MR{1603356 (99b:90142)}},
   doi={10.1007/BF01581168},
}

\bib{AM}{article}{
   author={Alon, N.},
   author={Milman, V. D.},
   title={$\lambda_1,$ isoperimetric inequalities for graphs, and
   superconcentrators},
   journal={J. Combin. Theory Ser. B},
   volume={38},
   date={1985},
   number={1},
   pages={73--88},
   issn={0095-8956},
   review={\MR{782626 (87b:05092)}},
   doi={10.1016/0095-8956(85)90092-9},
}

\bib{AS}{book}{
   author={Alon, Noga},
   author={Spencer, Joel H.},
   title={The probabilistic method},
   series={Wiley-Interscience Series in Discrete Mathematics and
   Optimization},
   edition={3},
   note={With an appendix on the life and work of Paul Erd\H os},
   publisher={John Wiley \& Sons, Inc., Hoboken, NJ},
   date={2008},
   pages={xviii+352},
   isbn={978-0-470-17020-5},
   review={\MR{2437651 (2009j:60004)}},
   doi={10.1002/9780470277331},
}

\bib{BSS90}{article}{
   author={Babai, L{\'a}szl{\'o}},
   author={Simonovits, Mikl{\'o}s},
   author={Spencer, Joel},
   title={Extremal subgraphs of random graphs},
   journal={J. Graph Theory},
   volume={14},
   date={1990},
   number={5},
   pages={599--622},
   issn={0364-9024},
   review={\MR{1073101 (91h:05112)}},
   doi={10.1002/jgt.3190140511},
}

\bib{Bol}{book}{
   author={Bollob{\'a}s, B{\'e}la},
   title={Extremal graph theory},
   note={Reprint of the 1978 original},
   publisher={Dover Publications, Inc., Mineola, NY},
   date={2004},
   pages={xx+488},
   isbn={0-486-43596-2},
   review={\MR{2078877}},
}

\bib{Chung}{article}{
   author={Chung, Fan},
   title={A spectral Tur\'an theorem},
   journal={Combin. Probab. Comput.},
   volume={14},
   date={2005},
   number={5-6},
   pages={755--767},
   issn={0963-5483},
   review={\MR{2174654 (2006e:05108)}},
   doi={10.1017/S0963548305006875},
}

\bib{CG}{article}{
   author={Conlon, D.},
   author={Gowers, W. T.},
   title={Combinatorial theorems in sparse random sets},
   note={Submitted}, 
   eprint={1011.4310},
}

\bib{ErdSim}{article}{
   author={Erd{\H{o}}s, P.},
   author={Simonovits, M.},
   title={A limit theorem in graph theory},
   journal={Studia Sci. Math. Hungar},
   volume={1},
   date={1966},
   pages={51--57},
   issn={0081-6906},
   review={\MR{0205876 (34 \#5702)}},
}

\bib{ErdSt}{article}{
   author={Erd{\H{o}}s, P.},
   author={Stone, A. H.},
   title={On the structure of linear graphs},
   journal={Bull. Amer. Math. Soc.},
   volume={52},
   date={1946},
   pages={1087--1091},
   issn={0002-9904},
   review={\MR{0018807 (8,333b)}},
}

\bib{HKL95}{article}{
   author={Haxell, P. E.},
   author={Kohayakawa, Y.},
   author={{\L}uczak, T.},
   title={Tur\'an's extremal problem in random graphs: forbidding even
   cycles},
   journal={J. Combin. Theory Ser. B},
   volume={64},
   date={1995},
   number={2},
   pages={273--287},
   issn={0095-8956},
   review={\MR{1339852 (96e:05151)}},
   doi={10.1006/jctb.1995.1035},
}

\bib{HKL96}{article}{
   author={Haxell, P. E.},
   author={Kohayakawa, Y.},
   author={{\L}uczak, T.},
   title={Tur\'an's extremal problem in random graphs: forbidding odd
   cycles},
   journal={Combinatorica},
   volume={16},
   date={1996},
   number={1},
   pages={107--122},
   issn={0209-9683},
   review={\MR{1394514 (97k:05180)}},
   doi={10.1007/BF01300129},
}

\bib{JLR}{book}{
   author={Janson, Svante},
   author={{\L}uczak, Tomasz},
   author={Rucinski, Andrzej},
   title={Random graphs},
   series={Wiley-Interscience Series in Discrete Mathematics and
   Optimization},
   publisher={Wiley-Interscience, New York},
   date={2000},
   pages={xii+333},
   isbn={0-471-17541-2},
   review={\MR{1782847 (2001k:05180)}},
   doi={10.1002/9781118032718},
}

\bib{KLR97}{article}{
   author={Kohayakawa, Y.},
   author={{\L}uczak, T.},
   author={R{\"o}dl, V.},
   title={On $K^4$-free subgraphs of random graphs},
   journal={Combinatorica},
   volume={17},
   date={1997},
   number={2},
   pages={173--213},
   issn={0209-9683},
   review={\MR{1479298 (98h:05166)}},
   doi={10.1007/BF01200906},
}

\bib{KRSSS}{article}{
   author={Kohayakawa, Yoshiharu},
   author={R{\"o}dl, Vojt{\v{e}}ch},
   author={Schacht, Mathias},
   author={Sissokho, Papa},
   author={Skokan, Jozef},
   title={Tur\'an's theorem for pseudo-random graphs},
   journal={J. Combin. Theory Ser. A},
   volume={114},
   date={2007},
   number={4},
   pages={631--657},
   issn={0097-3165},
   review={\MR{2319167 (2008f:05083)}},
   doi={10.1016/j.jcta.2006.08.004},
}

\bib{KLS10}{article}{
   author={Krivelevich, Michael},
   author={Lee, Choongbum},
   author={Sudakov, Benny},
   title={Resilient pancyclicity of random and pseudorandom graphs},
   journal={SIAM J. Discrete Math.},
   volume={24},
   date={2010},
   number={1},
   pages={1--16},
   issn={0895-4801},
   review={\MR{2600649}},
   doi={10.1137/090761148},
}

\bib{KS}{article}{
   author={Krivelevich, M.},
   author={Sudakov, B.},
   title={Pseudo-random graphs},
   conference={
      title={More sets, graphs and numbers},
   },
   book={
      series={Bolyai Soc. Math. Stud.},
      volume={15},
      publisher={Springer, Berlin},
   },
   date={2006},
   pages={199--262},
   review={\MR{2223394 (2007a:05130)}},
   doi={10.1007/978-3-540-32439-3\_10},
}

\bib{Sch}{article}{
   author={Schacht, Mathias},
   title={Extremal results for random discrete structures},
   note={Submitted},
   eprint={1603.00894},
}

\bib{SSzV}{article}{
   author={Sudakov, Benny},
   author={Szab{\'o}, Tibor},
   author={Vu, Van H.},
   title={A generalization of Tur\'an's theorem},
   journal={J. Graph Theory},
   volume={49},
   date={2005},
   number={3},
   pages={187--195},
   issn={0364-9024},
   review={\MR{2145507 (2005m:05120)}},
   doi={10.1002/jgt.20074},
}	

\bib{T}{article}{
   author={Tanner, R. Michael},
   title={Explicit concentrators from generalized $N$-gons},
   journal={SIAM J. Algebraic Discrete Methods},
   volume={5},
   date={1984},
   number={3},
   pages={287--293},
   issn={0196-5212},
   review={\MR{752035 (85k:68080)}},
   doi={10.1137/0605030},
}

\bib{Th}{article}{
   author={Thomason, Andrew},
   title={Random graphs, strongly regular graphs and pseudorandom graphs},
   conference={
      title={Surveys in combinatorics 1987},
      address={New Cross},
      date={1987},
   },
   book={
      series={London Math. Soc. Lecture Note Ser.},
      volume={123},
      publisher={Cambridge Univ. Press, Cambridge},
   },
   date={1987},
   pages={173--195},
   review={\MR{905280 (88m:05072)}},
}

\bib{Turan}{article}{
   author={Tur{\'a}n, Paul},
   title={Eine Extremalaufgabe aus der Graphentheorie},
   language={Hungarian, with German summary},
   journal={Mat. Fiz. Lapok},
   volume={48},
   date={1941},
   pages={436--452},
   review={\MR{0018405 (8,284j)}},
}

\end{biblist}
\end{bibdiv}

\end{document}